\documentclass{amsart}
\usepackage{all2021}
\usepackage{xypic}
\usepackage{comment}
\usepackage{xcolor}

\newtheorem{question}[thm]{Question}

\renewcommand{\Vec}{\mathbf{Vec}}
\newcommand{\Sh}{\mathrm{Sh}}
\newcommand{\cP}{\mathcal{P}}

\newcommand{\prk}{\textnormal{prk}}

\newcommand{\ul}[1]{\underline{#1}}
\newcommand{\ceil}[1]{\lceil #1 \rceil}
\newcommand{\floor}[1]{\lfloor #1 \rfloor}

\begin{document}

\title[Strength and partition rank]{Strength and partition rank\\ under limits and field extensions}

\author[Bik]{Arthur Bik}
\address{D.E.~Shaw \& Co., New York}
\email{arthur.bik@deshaw.com}

\author[Draisma]{Jan Draisma}
\address{Mathematical Institute, University of Bern, Sidlerstrasse 5, 3012 Bern, Switzerland; and Department of Mathematics and Computer Science, Eindhoven University of Technology, P.O. Box 513, 5600MB, Eindhoven, the Netherlands}
\email{jan.draisma@unibe.ch}
\thanks{JD was partially supported by Swiss National Science
Foundation project grant 200021-227864.}

\author[Lampert]{Amichai Lampert}
\address{Mathematics Department, University of Michigan, 530 Church Street, Ann Arbor, MI 48109-1043, USA}
\email{amichai@umich.edu}
\thanks{AL was supported by NSF grants DMS-2402041 and DMS-1926686.}

\author[Ziegler]{Tamar Ziegler}
\address{Einstein Institute of Mathematics,
Edmond J. Safra Campus, 
The Hebrew University of Jerusalem,
Givat Ram. Jerusalem, 9190401, Israel}
\email{tamar.ziegler@mail.huji.ac.il}

\begin{abstract}
The strength of a multivariate homogeneous polynomial is the minimal
number of terms in an expression as a sum of products of lower-degree
homogeneous polynomials. Partition rank is the analogue for multilinear
forms. Both ranks can drop under field extensions, and both can jump in
a limit. We show that, for fixed degree and under mild conditions on
the characteristic of the ground field, the strength is at
most a polynomial in the border strength. We also establish an analogous
result for partition rank. Our results control both the jump under limits and
the drop under field extensions.
\end{abstract}

\maketitle

\section{Introduction and main results}

Let $K$ be a field
and let $f \in K[x_1,\ldots,x_n]_d$ be a form (i.e., homogeneous
polynomial) of degree $d$. In \cite{Schmidt85}, Schmidt introduced the following measure of complexity for $f$:

\begin{de}
The {\em strength} of $f$, denoted $s_K(f)$, is the minimal number $r$
of terms in any expression
\[ f=\sum_{i=1}^r a_i b_i \]
where the $a_i$ and $b_i$ are forms in $K[x_1,\ldots,x_n]$ of positive degrees. If $d=1$ and $f$ is nonzero, then we define $s_K(f)=\infty$.  
\end{de}

\subsection{Strength and field extensions}

Strength, also known as Schmidt rank, was independently introduced by
Green-Tao in their work on distribution of polynomials over finite fields
\cite{Green09} and by Ananyan-Hochster in their proof of Stillman's
conjecture\cite{Hochster16}. One of the key results in the work of both
Schmidt and Ananyan-Hochster is that for algebraically closed $K$,
the strength of a form is closely related to the codimension of its
singular locus.

This relationship also plays a central role in the arithmetic
applications studied by Schmidt and Green-Tao, where, however, $K$ is
not algebraically closed. It is therefore
important to understand the behaviour of strength under field
extensions. If $L$ is a field extension of $K,$ then we may also
consider $f$ as a form with coefficients in $L.$ The inequalities
$s_{\ol{K}}(f) \le s_L(f)\le s_K(f)$ are immediate, where $\ol{K}$ is
an algebraic closure of $K$. The following simple example shows that
these inequalities may be strict.

\begin{ex} \label{ex:Quadric}
For $d=2,$ $s_K(f)$ is the minimal codimension
of a subspace of $K^n$ on which the quadratic form $f$ vanishes
identically. So for $f=x_1^2+\cdots+x_n^2$ we have $s_\RR(f)=n$ and
$s_\CC(f)=\ceil{\frac{n}{2}}$.
\end{ex}

Bik-Draisma-Snowden give an example \cite[Example 3.4]{Draisma24a}
showing that, in general, the drop in strength can be arbitrarily large.
However, for semi-perfect fields they show in \cite[Theorem 1.3]{Bik24}
that $s_K(f) \le \Gamma(s_{\ol{K}}(f),d)$ where $\Gamma$ is some
non-explicit function that may depend on $K$.

For certain fields satisfying $\cha(K)=0$ or $\cha(K)>d$, Lampert-Ziegler
proved quantitative bounds for $\Gamma$ \cite{Lampert22}.  Our first
theorem gives a quantitative bound for any field satisfying this
condition.

\begin{thm}\label{thm:rkbar}
Suppose that $s_{\ol{K}}(f) = r$ and that $\cha(K)=0$ or $\cha(K)>d$. Then $s_K(f) \ll_d r^{d-1}$ for infinite $K.$ If $K$ is finite then  $s_K(f) \ll_d r^{d-1}\log r$.
\end{thm}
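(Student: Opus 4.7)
My approach is induction on the degree $d$, using the codimension of the singular locus as a field-independent bridge between $\overline K$ and $K$. For the base case $d=2$, a quadric of $\overline K$-strength $r$ has rank at most $2r$, hence strength at most $2r$ over any field containing its coefficients, which is linear in $r$ as required.

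For the inductive step, write $f=\sum_{i=1}^{r}a_i b_i$ over $\overline K$, so that each partial derivative
\[ \partial_j f=\sum_i\bigl((\partial_j a_i)b_i+a_i(\partial_j b_i)\bigr) \]
has degree $d-1$ and $\overline K$-strength at most $2r$. The common zero locus of the family $\{\partial_j f\}_j$ equals $\mathrm{Sing}(f)$ and contains $V(a_1,b_1,\ldots,a_r,b_r)$, a variety of codimension at most $2r$. Since $\mathrm{Sing}(f)$ is cut out by polynomials in $K[x]$, this codimension bound is field-independent: $\mathrm{codim}_K\mathrm{Sing}(f)\leq 2r$. The key input I would then need is a \emph{joint} form of the induction hypothesis, producing a shared decomposition $\partial_j f=\sum_{i=1}^{s}\alpha_{ij}\beta_i$ with a single family $\{\beta_i\}$ used across all $j$ and with $s\ll_d r^{d-1}$ (respectively $s\ll_d r^{d-1}\log r$ over finite $K$), from the codimension bound on the common vanishing locus alone. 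Once this is in hand, Euler's identity $d\cdot f=\sum_j x_j\partial_j f=\sum_i\beta_i\bigl(\sum_j x_j\alpha_{ij}\bigr)$, valid since $\mathrm{char}(K)\nmid d$, exhibits $f$ as a sum of $s$ products and yields the desired bound on $s_K(f)$.

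The main obstacle is precisely this joint-decomposition step, a family analogue of the ``low codim of singular locus forces low strength'' comparison. The result of \cite{Lampert22} establishes a version of this for a restricted class of fields; the new content needed here is to extend it to arbitrary fields of the prescribed characteristic. Over finite $K$, the $\log r$ loss is characteristic of the Kazhdan--Ziegler-style probabilistic step in such arguments, where one iterates $O(\log r)$ random linear substitutions to drive a failure probability below $|K|^{-n}$ before closing with a union bound, an iteration one avoids when $K$ is infinite. The delicate bookkeeping is to thread these iterations (and any accompanying Galois-descent step) through the recursion on $d$ without inflating the final exponent of $r$ beyond $d-1$.
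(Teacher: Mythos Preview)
Your proposal is not a proof: you explicitly identify the ``joint decomposition'' step as the main obstacle and state that ``the new content needed here is to extend it to arbitrary fields of the prescribed characteristic.'' That missing step is not a technicality but the entire substance of the theorem. Concretely, your inductive hypothesis concerns a \emph{single} form of degree $d-1$; it says nothing about a family $\{\partial_j f\}_{j=1}^n$ sharing a common pool of factors $\{\beta_i\}$ whose size is independent of $n$. The bound $\operatorname{codim}\operatorname{Sing}(f)\le 2r$ is indeed field-independent, but converting ``small codimension of the singular locus'' into ``small $K$-strength'' (let alone a joint decomposition over $K$) is precisely the hard direction of the Schmidt/Ananyan--Hochster comparison, and no such result with exponent $d-1$ is available over arbitrary fields. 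So the Euler-identity step, while correct once $s$ is bounded, is resting on a lemma you have not supplied and which is at least as hard as the theorem itself.

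The paper's argument is entirely different and does not proceed by induction on $d$, nor via the singular locus, nor via Euler's identity. It first passes from strength to partition rank via symmetrisation (so the characteristic hypothesis is used only to invert $d!$), then observes that $s_{\overline K}(f)\le r$ implies the border bound $\underline{\mathrm{prk}}(\iota(f))\ll_d r$. The core is a de-bordering result for partition rank: a dimension count shows that the variety of border-rank-$\le r$ tensors in $(K^n)^{\otimes d}$ is proper once $n$ is a suitable linear function of $r$, and then the shift-functor/embedding machinery of \cite{Draisma18b} converts any nontrivial equation on that variety into an explicit partition-rank bound of order $n^{d-1}\ll_d r^{d-1}$ over $K$. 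Over finite fields, one additionally bounds the \emph{degree} of such an equation and passes to an extension $L/K$ of degree $\ll_d\log r$, incurring the $\log r$ factor via the elementary inequality $\mathrm{prk}_K\le [L:K]\cdot\mathrm{prk}_L$. None of this requires the joint-decomposition lemma you are missing.
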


Here, $\ll_d$ means that the left-hand side is bounded by a constant
times the right-hand side, where the constant depends only on $d$
(and in particular not on $K$). 

\subsection{De-bordering strength}

Theorem~\ref{thm:rkbar} is a consequence of a stronger result which we now describe.
In the following definition, we write $\cP_d$ for
the affine space over $\ZZ$ with coordinates labelled by monomials in
$x_1,\ldots,x_n$ of degree $d$. The $K$-points of this scheme are the
homogeneous forms of degree $d$ in $x_1,\ldots,x_n$ with coefficients
in $K$.

\begin{de}
The {\em border strength} $\ul{s}(f)$ of $f \in K[x_1,\ldots,x_n]_d$ is defined as the minimal $r$ for
which there exist $d_1,\ldots,d_r \in \{1,\ldots,d-1\}$ such that $f$
is a $K$-point in the Zariski closure of the image of the morphism
\begin{align*} &\mu_{d_1,\ldots,d_r}: \prod_{i=1}^r (\cP_{d_i} \times
\cP_{d-d_i}) \to \cP_d, \\
&((a_1,b_1),\ldots,(a_r,b_r)) \mapsto \sum_{i=1}^r a_i b_i. 
\end{align*}
The border strength of a nonzero linear form is defined as $\infty$.
\end{de}

Write $\ol{K}$ for an algebraic closure of $K$. 
The border strength $\ul{s}(f)$ is also the minimal $r$ such that $f$
lies in the Zariski closure in $\overline{K}[x_1,\ldots,x_n]_d$ of forms
of strength $\leq r$. The latter set is the union of the images of the
$\mu_{d_1,\ldots,d_r}$, hence constructible by Chevalley's theorem. Hence
for $K=\CC$, since constructible sets have the same closure in the Zariski
topology as in the Euclidean topology, $\ul{s}(f)$ is also the minimal
$r$ such that there exists a sequence of forms of strength $\leq r$
that converges to $f$ in the Euclidean topology. This explains the
term border strength.

\begin{re}
We do not know an explicit example of a form over $\CC$ whose border
strength is strictly lower than its strength. However, such forms do
exist for $d=4$ \cite[Theorem 3]{Ballico22} and likely also for $d >
4$. For $d\leq 3$ they do not exist, since there strength is the minimal
codimension of a linear space on which the form vanishes identically
\cite[Proposition 2.2]{Derksen17}, and vanishing on a large subspace is
a Zariski-closed condition on the form.
\end{re}

We have the following fundamental inequalities: 
\begin{equation}\label{eq:str-ineq}
    s_K(f)  \geq s_{\ol{K}}(f) \geq \ul{s}(f).
\end{equation}

The goal of this paper is to upper bound $s_K(f)$ as a function of
$\ul{s}(f)$, as follows. 

\begin{thm} \label{thm:strength}
Suppose that $\ul{s}(f) = r$ and $\cha(K)=0$ or $\cha(K)>d$. 
For infinite $K$ we have
$s_K(f) \ll_d r^{d-1}$ and for finite $K$ we have $s_K(f) \ll_d
r^{d-1} \log r$. 
\end{thm}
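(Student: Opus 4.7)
The plan is to deduce Theorem~\ref{thm:strength} from Theorem~\ref{thm:rkbar} by establishing a linear \emph{debordering} inequality over the algebraic closure,
\[
s_{\ol{K}}(f) \ll_d \ul{s}(f).
\]
Given this bound, setting $r = \ul{s}(f)$ yields $s_{\ol{K}}(f) \ll_d r$, and applying Theorem~\ref{thm:rkbar} to $f$ (which is defined over $K$) produces $s_K(f) \ll_d r^{d-1}$ for infinite $K$ and $s_K(f) \ll_d r^{d-1} \log r$ for finite $K$. So the entire task reduces to proving the linear debordering, a purely geometric statement over $\ol{K}$.

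To prove it I would apply the curve selection lemma to the image of $\mu = \mu_{d_1, \ldots, d_r}$: since $f$ lies in its Zariski closure, after a finite ramified base change there is a formal arc
\[
t \mapsto \bigl((a_1(t), b_1(t)), \ldots, (a_r(t), b_r(t))\bigr) \in \prod_{i=1}^r \bigl(\cP_{d_i} \times \cP_{d-d_i}\bigr)(\ol{K}((t)))
\]
with $P(t, x) := \sum_{i=1}^r a_i(t) b_i(t) \in \ol{K}[[t]][x_1, \ldots, x_n]_d$ and $P(0, x) = f$. Here the individual Laurent factors $a_i(t), b_i(t)$ may have poles at $t = 0$, provided that their sum is pole-free.

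The technical heart of the argument is \emph{clearing the poles} while keeping the number of terms linear in $r$. For each $i$ with $\mathrm{ord}_t(a_i) + \mathrm{ord}_t(b_i) \geq 0$, the $\mathbb{G}_m$-rescaling $(a_i, b_i) \mapsto (t^{k_i} a_i, t^{-k_i} b_i)$ preserves the product and can be used to move both factors into $\ol{K}[[t]][x]$. For pairs with $\mathrm{ord}_t(a_ib_i) < 0$, the coefficients of the most negative powers of $t$ must cancel in the sum, giving a strength-reducing relation $\sum_{j} a_{i_j}^{\mathrm{lt}} b_{i_j}^{\mathrm{lt}} = 0$ among leading forms. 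Using such a relation to rewrite a most-singular product in terms of less-singular ones, and iterating, I would strictly decrease the total pole order while multiplying the number of summands by at most a $d$-bounded factor. After finitely many steps the family becomes a polynomial $p(t, x) = \sum_{i=1}^{r'} a'_i(t) b'_i(t) \in \ol{K}[t][x]_d$ with $r' \ll_d r$, all factors in $\ol{K}[t][x]$, $p(0, x) = f$, and $t$-degree at most $D = D(d)$ depending only on $d$.

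The final step is Lagrange interpolation. Since $p(t, x)$ has $t$-degree at most $D$ and $\ol{K}$ is infinite, one chooses $D + 1$ distinct nonzero scalars $t_0, \ldots, t_D \in \ol{K}$ and interpolation coefficients $c_0, \ldots, c_D \in \ol{K}$ with
\[
f = p(0, x) = \sum_{j=0}^{D} c_j\, p(t_j, x) = \sum_{j=0}^{D} \sum_{i=1}^{r'} \bigl(c_j a'_i(t_j)\bigr) b'_i(t_j),
\]
exhibiting $f$ as a sum of at most $r'(D + 1) \ll_d r$ products of positive-degree forms over $\ol{K}$, so $s_{\ol{K}}(f) \ll_d r$. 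I expect the main obstacle to be the pole-clearing step: ensuring $r' \ll_d r$ and $D \ll_d 1$ simultaneously requires a careful induction, and it is here — rather than in the Lagrange interpolation — that the characteristic hypothesis enters most substantively, for instance when dividing by small integer multiplicities arising from Taylor expansions or when inverting leading terms while solving the pole-cancellation relations.
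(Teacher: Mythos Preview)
Your plan has two structural problems.

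\textbf{Circularity.} In this paper Theorem~\ref{thm:rkbar} is not proved independently; it is deduced \emph{from} Theorem~\ref{thm:strength} via the chain $s_K(f)\ge s_{\ol K}(f)\ge \ul s(f)$. So invoking Theorem~\ref{thm:rkbar} as a black box to prove Theorem~\ref{thm:strength} is circular. You would first need an independent proof of Theorem~\ref{thm:rkbar} for arbitrary fields with $\cha(K)=0$ or $\cha(K)>d$, and the paper does not supply one.

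\textbf{The linear debordering step is the whole problem.} Your reduction hinges on the inequality $s_{\ol K}(f)\ll_d \ul s(f)$, but this is precisely the special case $K=\ol K$ of Question~\ref{q:lin-bounds}, which the paper poses as open. Your sketch does not close the gap. Curve selection gives an arc $P(t,x)=\sum_i a_i(t)b_i(t)$ with $P(0,x)=f$, but there is no mechanism forcing the $t$-degree of a pole-cleared representative to be bounded by a constant $D(d)$ depending only on $d$; in general the arc and the cancellations among leading terms can be arbitrarily deep. The ``rewrite a most-singular product using a cancellation relation'' move does not obviously decrease a well-founded invariant while keeping the number of summands $\ll_d r$: substituting the relation $\sum_j a_{i_j}^{\mathrm{lt}}b_{i_j}^{\mathrm{lt}}=0$ back into $a_{i_0}b_{i_0}$ produces cross-terms whose pole orders and count you have not controlled. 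This is exactly the phenomenon that makes border rank strictly smaller than rank in the first place---if pole-clearing worked with only $d$-bounded blow-up, border strength and strength would coincide up to a constant over $\ol K$, which the paper (via \cite{Ballico22}) notes is false for $d\ge 4$ at least in the sense that they differ; a \emph{linear} bound is not known.

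\textbf{What the paper actually does.} The proof goes through partition rank. One passes from $f\in S^dV$ to $t=\iota(f)\in V^{\otimes d}$, so that $\ul{\prk}(t)\le D\cdot\ul s(f)$ with $D=\binom{d}{\lfloor d/2\rfloor}$ (Proposition~\ref{prop:Sym}). One then bounds $\prk_K(t)$ directly: a dimension count shows the border-partition-rank-$\le r$ locus is proper already in $(K^n)^{\otimes d}$ for $n\ll_d r$ (Proposition~\ref{prop:Bound}), and Theorem~\ref{thm:Master} / Corollary~\ref{cor:Master} (the embedding argument from \cite{Draisma18b}) converts a nontrivial equation on that locus into a partition-rank bound of order $n^{d-1}\ll_d r^{d-1}$. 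Finally $s_K(f)=s_K(\pi(t)/d!)\le \prk_K(t)$. The finite-field factor $\log r$ enters only through the degree estimate in Proposition~\ref{prop:DegBound} and the field-extension trick of Proposition~\ref{prop:FieldExtension}. No debordering-to-limit or interpolation argument is used.
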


Note that Theorem \ref{thm:rkbar} follows immediately by the inequalities \eqref{eq:str-ineq}. Following the theoretical computer
science literature \cite{Dutta22}, we call a result like that
in Theorem~\ref{thm:strength} a {\em de-bordering result}.

\subsection{Expressing low-strength polynomials as polynomials in
derivatives}

As a by-product of the proof of Theorem~\ref{thm:strength} and its
generalisations below, we will establish the following result. Given
$f \in K[x_1,\ldots,x_n]_d$, let $D(f)$ be the subspace of
$\bigoplus_{e=1}^{d-1} K[x_1,\ldots,x_n]_e$ spanned by all polynomials
of the form 
\[ \frac{\partial}{\partial x_{i_1}} \cdots \frac{\partial}{\partial
x_{i_l}} f \]
with $l \in \{1,\ldots,d-1\}$ and $i_1,\ldots,i_l \in \{1,\ldots,n\}$. 

\begin{thm} \label{thm:Df}
Suppose $\ul{s}(f) = r$ and $\cha(K)=0$ or $\cha(K)>d$. Then, if $K$ is infinite, 
$f$ lies in a subalgebra generated by $\ll_d r^d$ elements of $D(f)$. For finite $K,$ $f$ lies in a subalgebra generated by $\ll_d r^d\log r $ elements of $D(f).$
\end{thm}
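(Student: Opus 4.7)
The plan is to obtain Theorem~\ref{thm:Df} as a refinement of Theorem~\ref{thm:strength}, by arranging that the strength decomposition produced there has all its factors living in a small subalgebra of $D(f)$. The target count $\ll_d r^d$ (respectively $\ll_d r^d \log r$) looks like $r$ times the bound $\ll_d r^{d-1}$ from Theorem~\ref{thm:strength}, which suggests a two-level argument: outer level giving $\ll_d r^{d-1}$ summands, inner level expressing each factor via $\ll_d r$ derivatives.

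First, I would apply Theorem~\ref{thm:strength} to produce $f = \sum_{i=1}^{s} a_i b_i$ with $s \ll_d r^{d-1}$, where $\deg(a_i) = d_i \in \{1,\ldots,d-1\}$ and $\deg(b_i) = d-d_i$. Since the proof of Theorem~\ref{thm:strength} starts from a border family $f_t = \sum_{i=1}^r a_i(t) b_i(t) \in K((t))[x_1,\ldots,x_n]_d$ with $\lim_{t\to 0} f_t = f$ and extracts the $a_i, b_i$ from finitely many leading Laurent coefficients of the $a_i(t), b_i(t)$, I would track exactly which auxiliary forms appear, aiming to identify each of them with a polynomial in elements of $D(f)$.

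The key identity is Leibniz's rule applied to the border family: for any multi-index $I$ with $|I| < d$,
\[
\partial_I f \;=\; \lim_{t \to 0} \partial_I f_t \;=\; \sum_{i=1}^r \sum_{I_1 + I_2 = I} \binom{I}{I_1} \lim_{t \to 0} (\partial_{I_1} a_i(t))(\partial_{I_2} b_i(t)).
\]
Under the hypothesis $\cha(K) = 0$ or $\cha(K) > d$, all binomial coefficients $\binom{I}{I_1}$ with $|I| < d$ are invertible. Applying this relation inductively on $|I|$, each Taylor coefficient needed to build the $a_i, b_i$ can be recovered, modulo products of previously processed coefficients, as a $K$-linear combination of elements of $D(f)$. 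Collecting the derivatives used across all summands $i = 1, \ldots, s$ yields $\ll_d r$ elements of $D(f)$ per factor, and hence $\ll_d r \cdot s \ll_d r^d$ generators in total.

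The main obstacle will be the bookkeeping in the inductive recovery step: one must simultaneously control which partial derivatives are introduced, invert the Leibniz relations cleanly (which is where the characteristic hypothesis is crucial), and verify that distinct factors across the $s$ summands do not each demand a fresh pool of $\ll_d r$ derivatives in an uncontrolled way. I expect a careful analysis of the leading-order behavior of the border family---together with the fact that only boundedly-many (in $d$) Taylor coefficients per $a_i(t), b_i(t)$ actually contribute to the final $a_i, b_i$---to keep the accounting tight. Once this is in place, the stated bound falls out directly, and the $\log r$ overhead in the finite-field case is inherited verbatim from Theorem~\ref{thm:strength}.
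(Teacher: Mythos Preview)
Your proposal rests on a mischaracterisation of how Theorem~\ref{thm:strength} is proved in the paper. The paper does not construct a border family $f_t=\sum_{i=1}^r a_i(t)b_i(t)$ over $K((t))$ and extract Laurent coefficients; Theorem~\ref{thm:strength} is deduced from Theorem~\ref{thm:collectiveprank} via the symmetrisation maps $\iota,\pi$, and the latter is proved with the $\Vec^d$-subvariety machinery (Theorem~\ref{thm:Master}, Corollary~\ref{cor:Master}), with no Laurent series in sight. The paper's proof of Theorem~\ref{thm:Df} follows the same route: one sets $t=\iota(f)\in V^{\otimes d}$, notes that $\ul{\prk}(t)\leq Dr$, and applies Corollary~\ref{cor:Algebra} to place $t$ in a subalgebra of $T(V)$ generated by $\ll_d r^d$ \emph{proper contractions} of $t$. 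Applying $\pi$ sends a proper contraction of $\iota(f)$ to an element of $D(f)$ (contracting against a pure tensor in $(V^*)^{\otimes I^c}$ and then symmetrising is, up to an invertible constant, a partial derivative of $f$), so the result follows. The $r^d$ count comes directly from the bound $\sum_j n_j + \prod_j (n_j+1)$ in Corollary~\ref{cor:Algebra} with each $n_j\ll_d r$; it is not the product of the $r^{d-1}$ from Theorem~\ref{thm:strength} with an extra factor of $r$ as you suggest.

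Your Laurent-series approach faces a genuine obstruction beyond bookkeeping. First, a one-parameter family over $K((t))$ need not exist when $K$ is not algebraically closed: border strength is a Zariski-closure condition, and curve degenerations are only guaranteed after passing to $\overline{K}$, so your derivatives would at best land in $\overline{K}\otimes_K D(f)$. Second, and more seriously, in a border family $f_t=\sum_i a_i(t)b_i(t)$ the individual factors $a_i(t),b_i(t)$ typically have poles in $t$ whose principal parts cancel in the sum; this is exactly what distinguishes border rank from rank. Your Leibniz identity involves limits of \emph{products} $(\partial_{I_1}a_i(t))(\partial_{I_2}b_i(t))$, which need not exist term-by-term, and you give no mechanism for organising the surviving pieces into a bounded set of elements of $D(f)$ over $K$. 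The sentence ``I expect a careful analysis\ldots to keep the accounting tight'' is precisely where the content would have to be, and none is supplied.
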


\subsection{Connections to the literature}

Bounds for the strength of a form and for the partition rank of a
tensor in terms of its strength / partition rank over the algebraic
closure have been found for cubics in \cite{Adiprasito21}, for quartics
in \cite{Kazhdan23}, and for general degree but over certain fields
in \cite{Lampert22,Baily24}. In \cite{Adiprasito21}, these results
were applied to finite fields in order to bound strength/partition
rank in terms of \emph{analytic rank}. A well-known conjecture in
higher order Fourier analysis is that, for fixed degree, strength is
bounded by a constant multiple of analytic rank; for the currently
strongest results along these lines see \cite{Cohen21,Moshkovitz22}.
Adiprasito-Kazhdan-Ziegler conjectured \cite{Adiprasito21} that
Theorem \ref{thm:rkbar} holds with linear bounds, and showed that
this implies linear bounds for strength in terms of analytic rank. In
\cite{Moshkovitz22} and \cite{Baily24}, quasi-linear bounds were obtained
for finite fields. See \cite{Chen24} for additional results related to
this conjecture. We can now formulate two natural stronger versions of
Theorems~\ref{thm:strength} and~\ref{thm:Df}.

\begin{question}\label{q:lin-bounds}
    Can the bounds in Theorem \ref{thm:strength} and/or Theorem \ref{thm:Df} be taken linear in $r$?
\end{question}

Another strand of research, motivated by questions in theoretical computer
science, concerns {\em de-bordering} complexity
measures. A recent result in this direction is that the Waring rank of
a degree-$d$ form over $\CC$ admits an upper bound that is exponential
in its border Waring rank but {\em linear} in $d$ \cite{Dutta22}. 

Finally, our work is closely related to Karam's result that says that
bounded partition rank can be recognised by looking at all subtensors
of some given size \cite{Karam22}. Indeed, a quantitative version of
that statement involved degree estimations for equations vanishing on
tensors of bounded partition rank similar to those we will use below;
see \cite{Draisma23a}.

\subsection{De-bordering collective strength}

Our techniques for proving Theorem~\ref{thm:strength} also apply to
tuples of polynomials, to tensors, and to tuples of tensors. In this
and the next few sections, we record these additional results. 
First, for tuples of degree-$d$ forms, the following notion is a natural 
analogue of strength.

\begin{de}
Let $\ul{f}=(f_1,\ldots,f_m) \in K[x_1,\ldots,x_n]_d^m$. The {\em
strength} $s_K(\ul{f})$ is the minimum of $s_K(c_1 f_1 + \cdots + c_m f_m)$
over all $(c_1,\ldots,c_m) \in K^m \setminus \{(0,\ldots,0)\}$.
\end{de}

Thus the strength is $0$ if and only if $f_1,\ldots,f_m$
are linearly dependent.

\begin{de}
The {\em border strength} $\ul{s}(\ul{f})$ is defined as the
minimal $r$ such that there exist $d_1,\ldots,d_r \in \{1,\ldots,d-1\}$
for which $\ul{f}$ is a $K$-point in the image closure of the following
morphism over $\ZZ$:
\begin{align} \label{eq:Mu}
\mu_{m,d_1,\ldots,d_r}:&\AA^{m \times m} \times (\cP_d)^{m-1} \times 
\prod_{i=1}^{r} (\cP_{d_i} \times
\cP_{d-d_i}) 
\to (\cP_d)^m   \\   
&(g,(h_1,\ldots,h_{m-1}),((a_1,b_1),\ldots,(a_r,b_r))) \notag \\
&\quad \quad \mapsto 
g \cdot
(h_1,\ldots,h_{m-1},\mu_{d_1,\ldots,d_r}((a_1,b_1),\ldots,(a_r,b_r)))
\notag.
\end{align}
Here the notation $g \cdot $ means that in the $i$-th position, we take
the linear combination of the following $m$ polynomials with coefficients
given by the $i$-th row of $g$.
\end{de}

On $K$-points, the morphism in \eqref{eq:Mu} does the following: we pick $m-1$
arbitrary degree-$d$ forms and a degree-$d$ form of strength $\leq r$ (which admits a
decomposition via $\mu_{d_1,\ldots,d_r})$, and take $m$ arbitrary linear
combinations of these $r$ forms. Again, $\ul{s}(\ul{f})$ is the minimal $r$
such that $\ul{f}$ lies in the Zariski closure in $\ol{K}[x_1,\ldots,x_n]_d^m$
of the set of $m$-tuples of collective strength $\leq r$.

\begin{thm} \label{thm:collectivestrength}
Suppose $\ul{s}(\ul{f}) =r$ and $\cha(K)=0$ or $\cha(K)>d$. Then $s_K(\ul{f}) \ll_d m^3r^{d-1}$ if
$K$ is infinite and $s_K(\ul{f}) \ll_d 
m^3r^{d-1} \cdot \log(r+m)$ if $K$ is finite. Consequently,
$s_K(\ul{f})$ is also bounded by a polynomial in $s_{\overline{K}}(\ul{f})$.
\end{thm}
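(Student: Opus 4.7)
The plan is to reduce Theorem~\ref{thm:collectivestrength} to Theorem~\ref{thm:strength} by extracting a nonzero $K$-rational linear combination $c \cdot \ul{f}$ whose single-form border strength is controlled, and then invoking Theorem~\ref{thm:strength}. First I would unpack the image of $\mu_{m,d_1,\ldots,d_r}$: it consists of $m$-tuples whose span lies in an at-most-$m$-dimensional subspace of $K[x]_d$ containing a form of strength $\le r$. Passing to the Zariski closure and using the valuative criterion together with the properness of $\mathbb{P}^{m-1}$, one obtains the equivalence that $\ul{s}(\ul{f}) \le r$ holds iff there exists $c \in \overline{K}^m \setminus\{0\}$ with $\ul{s}(c \cdot \ul{f}) \le r$ as a single form. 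Concretely, given a one-parameter family $\ul{f}(t) \to \ul{f}$ of tuples of collective strength $\le r$, one tracks the accompanying coefficient vector $c(t) \in \overline{K}((t))^m$ realizing a strength-$\le r$ combination of $\ul{f}(t)$, normalizes it (clearing denominators and the leading power of $t$) to be regular and nonzero at $t=0$, and takes $c = c(0)$; the family $c(t) \cdot \ul{f}(t)$ witnesses $\ul{s}(c \cdot \ul{f}) \le r$.

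Next I would descend $c$ from $\overline{K}^m$ to $K^m$. Let $L = K(c)$, $e = [L:K] \le m$, and let $\sigma_1,\ldots,\sigma_e$ be the $K$-embeddings $L \hookrightarrow \overline{K}$. Each $\sigma_i(c) \cdot \ul{f}$ equals $\sigma_i(c \cdot \ul{f})$ and hence has border strength $\le r$ by Galois-invariance of the Zariski closure. Under the hypothesis $\cha(K) = 0$ or $\cha(K) > d$, the extension $L/K$ is separable, so non-degeneracy of the trace form yields $\alpha \in L$ with $c_K := \mathrm{Tr}_{L/K}(\alpha c) \in K^m \setminus \{0\}$. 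Subadditivity of border strength then gives $\ul{s}(c_K \cdot \ul{f}) \le e \cdot r \le m r$, and applying Theorem~\ref{thm:strength} to the single form $c_K \cdot \ul{f}$ yields $s_K(\ul{f}) \le s_K(c_K \cdot \ul{f}) \ll_d (mr)^{d-1}$, with an extra $\log(mr)$ factor for finite $K$. The sharper $m^3$ dependence claimed in the statement demands tightening the descent---for instance, picking a $K$-rational direction from the positive-dimensional cone $\{c \in \AA^m : \ul{s}(c \cdot \ul{f}) \le r\}$ without summing the full Galois orbit, or invoking a tuple-adapted variant of Theorem~\ref{thm:Df} that amortises the strength cost across the $m$ coordinates rather than inflating it geometrically.

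The main obstacle is precisely the $K$-rational descent: a closed subcone of $\AA^m$ defined over $K$ need not have nonzero $K$-points even when it has $\overline{K}$-points, so some averaging or twisting is unavoidable, and the separability provided by the characteristic hypothesis is essential. Over finite fields the vanishing-trace case requires extra care, which manifests as the $\log(r+m)$ factor in the statement; a counting/pigeonhole argument in the spirit of the finite-field part of Theorem~\ref{thm:strength} should absorb this cost. The final consequence that $s_K(\ul{f})$ is polynomial in $s_{\overline{K}}(\ul{f})$ is then immediate from the inequality $s_{\overline{K}}(\ul{f}) \ge \ul{s}(\ul{f})$, the tuple analogue of \eqref{eq:str-ineq}.
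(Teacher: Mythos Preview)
Your strategy is genuinely different from the paper's. The paper does \emph{not} reduce the collective statement to the $m=1$ case of Theorem~\ref{thm:strength}; instead it reduces Theorem~\ref{thm:collectivestrength} to Theorem~\ref{thm:collectiveprank} via the symmetrisation maps $\iota,\pi$ of Proposition~\ref{prop:Sym}: one sends $\ul{f}$ to $\ul{t}=(\iota(f_1),\ldots,\iota(f_m))$, bounds $\ul{\prk}(\ul{t})\le D\,r$ with $D=\binom{d}{\lfloor d/2\rfloor}$, applies Theorem~\ref{thm:collectiveprank} to bound $\prk_K(\ul{t})$, and then uses $s_K(\ul{f})\le\prk_K(\ul{t})$ (valid since $d!$ is invertible). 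No descent of a coefficient vector from $\overline{K}$ to $K$ is needed at this stage.

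Your route has a real gap. The assertion $e=[K(c):K]\le m$ is unjustified and in general false: a closed $K$-subscheme of $\mathbb{P}^{m-1}$ can consist of a single closed point of arbitrarily large residue degree, so the field generated by the coordinates of $c$ is not controlled by $m$. Likewise, the claim that ``$\cha(K)=0$ or $\cha(K)>d$'' forces $L/K$ to be separable is incorrect (take $K=\FF_p(t)$ with $p>d$ and $L=K(t^{1/p})$); the characteristic hypothesis in the paper is used only to make $d!$ invertible, not to control field extensions. As written, your trace argument therefore yields no bound at all.

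A partial repair, which you almost gesture at, is to replace the trace by linear Galois descent: the $\overline{K}$-span of the Galois orbit of $c$ is a $G$-stable subspace of $\overline{K}^m$ of dimension $w\le m$, hence (over perfect $K$, in particular in characteristic~$0$ or for finite $K$) equals $W\otimes_K\overline{K}$ for some $K$-subspace $W\subseteq K^m$; any nonzero $c_K\in W$ is then a $\overline{K}$-linear combination of at most $w$ conjugates of $c$, giving $\ul{s}(c_K\cdot\ul{f})\le m\,r$ by subadditivity. Feeding this into Theorem~\ref{thm:strength} yields $s_K(\ul{f})\ll_d (mr)^{d-1}$, which matches the claimed bound only for $d\le 4$ and does not obviously cover imperfect infinite $K$ of positive characteristic. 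So even after repair your reduction gives a weaker $m$-dependence than the paper's approach, and the ``tightening'' you allude to would essentially require redoing the tuple argument rather than reducing to $m=1$.
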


\subsection{De-bordering partition rank}

The results that we will state now require no assumptions on $\cha(K)$.
Let $V_1,\ldots,V_d$ be finite-dimensional $K$-vector spaces and let $t
\in V_1 \otimes \cdots \otimes V_d$; unless stated otherwise, tensor
products are over $K$. For a subset $I \subseteq [d]:=\{1,\ldots,d\}$
we write $V_I:=\bigotimes_{i \in I} V_i$. We have a canonical isomorphism
$V_I \otimes V_{I^c} \to V_1 \otimes \cdots \otimes V_d$, where $I^c=[d]
\setminus I$, and we will use this isomorphism to identify the two
spaces.

\begin{de}
The {\em partition rank} $\prk_K(t)$ of $t$ is the minimal $r$ in any
expression
\[ t=\sum_{i=1}^r a_i \otimes b_i \]
where each  $a_i \in V_{I_i}$ and $b_i \in V_{I_i^c}$ for some
proper, nonempty subset $I_i\subset [d]$. The {\em border partition rank}
$\ul{\prk}(t)$ is defined in a similar fashion as the border strength.
\end{de}

Partition rank was introduced by Naslund in \cite{Naslund20} and has
found applications in higher Fourier analysis and additive combinatorics
\cite{Cohen21,Gowers22,Lovett19,Milicevic19,Moshkovitz22,Naslund20b}. As
we will soon see, it is closely related to strength.

\begin{re} \label{re:EasyBound}
The partition rank of any tensor $t \in V_1 \otimes \cdots \otimes V_d$
is at most \linebreak
$r:=\min_j \dim(V_j)$, because $t$ admits an expression as above
with $I_i=\{j\}$ for all $i$, where $j$ attains the minimum, and with 
$a_i$ running through a basis of $V_j$. 
\end{re}

For a field extension $L$ of $K$, write $V_L$ for the $L$-vector space
$L \otimes_K V$. We may think of $t \in V_1 \otimes \cdots \otimes V_d$
as an element of the tensor product $(V_1)_L \otimes_L \cdots \otimes_L
(V_d)_L$, and then we have $\prk_L(t) \leq \prk_K(t)$.  In particular, as
before, we have
\[ \prk_K(t) \geq \prk_L(t) \geq \prk_{\overline{K}}(t) \geq \ul{\prk}(t). \]
Our de-bordering result for partition rank is as follows. 

\begin{thm} \label{thm:prank}
Suppose that $\ul{\prk}(t) = r.$ Then $\prk_K(t) \ll_d r^{d-1}$ if $K$ is infinite
and $\prk_K(t) \leq \ll_d r^{d-1} \cdot \log(r)$ if $K$ is
finite. In particular, $\prk_K(t)$ is also bounded by a polynomial in
$\prk_{\overline{K}}(t)$.
\end{thm}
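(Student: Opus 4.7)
The plan is to induct on the order $d$ of the tensor. The base case $d=2$ is immediate: the partition rank of a matrix is its matrix rank, and the condition ``rank $\le r$'' is defined by the vanishing of $(r+1)$-minors, hence Zariski-closed. So $\prk_K(t) = \ul{\prk}(t) \le r$ for $d=2$, with no logarithmic factor.

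For the inductive step, the strategy parallels the proof of Theorem~\ref{thm:strength}, with the slicing operation on tensors playing the role that partial differentiation plays for polynomials. For a proper nonempty $I \subsetneq [d]$ and $\phi \in V_I^*$, the contraction $t(\phi) \in V_{I^c}$ is a tensor of order $d - |I| < d$, and generically it inherits border partition rank $\le r$ from $t$. The inductive hypothesis then gives $\prk_K(t(\phi)) \ll_d r^{d-2}$ (respectively $r^{d-2}\log r$ in the finite-field case). It remains to reconstruct $t$ itself from a bounded number of such contractions. I would establish a tensor analogue of Theorem~\ref{thm:Df}: there exists a decomposition
\[ t = \sum_{\ell=1}^{N} a_\ell \otimes b_\ell, \qquad N \ll_d r, \]
where each $a_\ell, b_\ell$ is a contraction of $t$ against a tensor living in a bounded-dimensional auxiliary space. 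To prove this, I would use a degenerating family $t_\epsilon = \sum_{j=1}^r a_j(\epsilon) \otimes b_j(\epsilon) \to t$ with $a_j(\epsilon), b_j(\epsilon)$ Laurent series in a formal parameter $\epsilon$, and extract the $\epsilon^0$-coefficient; this expresses $t$ as $O(r \cdot M)$ partition-rank-one summands, where $M$ is the Laurent-pole order. The quantity $M$ is controlled by the degree of equations cutting out the Zariski closure of partition-rank-$\le r$ tensors, which in turn is bounded in terms of $r$ and $d$ by the quantitative Karam-type estimates of~\cite{Draisma23a}. Combined with the inductive bound on the contractions, this yields $\prk_K(t) \le N \cdot \max_\ell \prk_K(a_\ell \otimes b_\ell) \ll_d r \cdot r^{d-2} = r^{d-1}$.

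The main obstacle I expect is the ``degenerate'' interaction between contraction and partition rank: when a partition-rank summand of $t$ has its index set $I_j$ equal to a singleton in the chosen slicing direction, contracting it produces a scalar multiple of a $(d-1)$-tensor whose partition rank may be arbitrarily large, so slicing does not cleanly reduce the complexity of that summand. A separate treatment of these degenerate summands, essentially peeling off the ``rank-one-in-one-direction'' part of $t$ before invoking the inductive hypothesis on the residual tensor, will be necessary. Finally, the extra $\log r$ factor for finite $K$ arises because a generic choice of contraction vector (which is automatic over infinite fields) must be replaced by a greedy iterative selection, contributing a logarithmic number of rounds before enough independent contractions are accumulated.
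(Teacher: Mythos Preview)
Your approach is fundamentally different from the paper's and contains a genuine gap. The paper does \emph{not} induct on $d$. Instead it deduces Theorem~\ref{thm:prank} as the case $m=1$ of Theorem~\ref{thm:collectiveprank}, whose proof runs as follows: a dimension count (Proposition~\ref{prop:Bound}) shows $X_r(U)\subsetneq T_{[d]}(U)$ already for $U=(K^n,\ldots,K^n)$ with $n$ \emph{linear} in $r$; a structural result from \cite{Draisma18b} (Theorem~\ref{thm:Master} and Corollary~\ref{cor:Master}) then bounds $\prk_K(t)$ by \eqref{eq:HarderBound} on the locus where a chosen partial derivative $h$ of a defining equation does not vanish; and one handles the vanishing locus of $h$ by inducting on the \emph{degree} of the defining equation, not on $d$. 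The finite-field $\log r$ comes from passing to an extension of degree $\ll_d \log r$ so that the polynomial $h(T_{[d]}(\phi)t)$ of bounded degree has a nonvanishing point.

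The core problem with your plan is the step ``generically $t(\phi)$ inherits border partition rank $\le r$.'' This is false, and the obstacle you flag is not a side case but the whole difficulty. Take $t=v\otimes b$ with $v\in V_1$ and $b\in V_2\otimes\cdots\otimes V_d$ arbitrary: then $\ul{\prk}(t)=1$, yet every slice $t(\phi)=\phi(v)\, b$ has the full (border) partition rank of $b$, which is unbounded. Your proposed fix---peel off the ``rank-one-in-one-direction'' part first---presupposes access to an actual partition-rank decomposition, but you only have \emph{border} partition rank $\le r$; there need be no honest decomposition over $K$ (or even over $\ol K$) to peel from. Separately, the Laurent-series step is unsubstantiated: there is no general mechanism bounding the pole order $M$ of a degenerating family by the degree of equations for the closure, and over finite fields no such analytic degeneration exists at all. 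Finally, your closing bookkeeping is inconsistent: once you have $t=\sum_{\ell=1}^N a_\ell\otimes b_\ell$ with each summand a pure product, each term already has partition rank $1$, so the inductive bound on the $a_\ell,b_\ell$ plays no role and the claimed $r\cdot r^{d-2}$ has no source.
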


\subsection{De-bordering collective partition rank}

\begin{de}
Let $m$ be a positive integer. The {\em (collective) partition rank}, $\prk_K(\ul{t})$,
of $\ul{t}=(t_1,\ldots,t_m) \in (V_1 \otimes \cdots \otimes V_d)^m$ is the
minimum of \linebreak $\prk_K(c_1 t_1 + \cdots + c_m t_m)$ over all $(c_1,\ldots,c_m)
\in K^m \setminus \{(0,\ldots,0)\}$. The {\em border (collective)
partition rank} of $\ul{t}$ is defined similarly as before. 
\end{de}

\begin{thm} \label{thm:collectiveprank}
Suppose $\ul{\prk}(\ul{t}) = r.$ Then $\prk_K(\ul{t}) \ll_d m^3r^{d-1}$ if $K$ is infinite and 
 $\prk_K(\ul{t}) \ll_d m^3r^{d-1} \log(r+m)$ if $K$ is finite. In particular, $\prk_K(\ul{t})$ is also bounded by a polynomial in $m$ and $ \prk_{\ol{K}}(\ul{t})$.
\end{thm}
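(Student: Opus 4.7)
The plan is to follow the template already set up for Theorem~\ref{thm:collectivestrength}: reduce to the single-tensor case by producing a linear combination $c \cdot \ul{t}$ (with $c \in K^m \setminus \{0\}$) whose border partition rank is polynomially bounded in $m$ and $r$, then apply Theorem~\ref{thm:prank}.

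First I would unpack the hypothesis $\ul{\prk}(\ul{t})=r$ via the tensor analogue of the morphism $\mu_{m,d_1,\ldots,d_r}$. A $K$-point of its image has the form $\ul{f}=g\cdot(h_1,\ldots,h_{m-1},\phi)$ with $\phi$ of partition rank at most $r$. When $g$ is invertible, the covector $c=g^{-T}e_m\in K^m$ satisfies $c\cdot\ul{f}=\phi$ and hence $\prk(c\cdot\ul{f})\le r$. Thus over the (open) image there is a well-defined distinguished direction $c\in\mathbb{P}^{m-1}$ along which the partition rank is controlled by $r$.

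Next I would propagate this distinguished direction to the Zariski closure. Resolving the rational map from the image of $\mu$ to $\mathbb{P}^{m-1}$ and specialising along a one-parameter family of image points degenerating to $\ul{t}$ yields a limit direction $c\in\ol{K}^m\setminus\{0\}$ for which $c\cdot\ul{t}$ is a limit of partition-rank-at-most-$r$ tensors; in other words $\ul{\prk}(c\cdot\ul{t})\le r$ over $\ol{K}$. Descending from $\ol{K}$ to $K$ is the delicate part: one needs a $K$-point on a certain closed subvariety of $\mathbb{P}^{m-1}$ cut out by the equations for low border partition rank of $c\cdot\ul{t}$. For infinite $K$ a Nullstellensatz-type argument suffices; for finite $K$ one enumerates candidates over coset representatives at a cost of a $\log(r+m)$ factor, entirely parallel to the finite-field term appearing in Theorem~\ref{thm:prank}.

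Finally, applying Theorem~\ref{thm:prank} to the single tensor $c\cdot\ul{t}$ yields $\prk_K(c\cdot\ul{t})\ll_d r^{d-1}$ (respectively $\ll_d r^{d-1}\log r$ for finite $K$), and the inequality $\prk_K(\ul{t})\le\prk_K(c\cdot\ul{t})$ from the definition of collective partition rank concludes the bound. The polynomial-in-$m$ losses incurred in the descent step (coming from the degree of the auxiliary subvariety in $\mathbb{P}^{m-1}$ and from possible branching of the distinguished direction) must combine with the $r^{d-1}$ single-tensor bound to give the stated $m^3 r^{d-1}$. I expect the main obstacle to be precisely this descent: quantifying how much the border partition rank of $c\cdot\ul{t}$ can grow when one insists on $c\in K^m$, and bounding the resulting polynomial in $m$ tightly enough, which will likely require explicit degree estimates for the defining equations of the low-partition-rank locus in the spirit of \cite{Draisma23a}.
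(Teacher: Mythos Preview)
Your proposal misreads the paper's logical structure: Theorem~\ref{thm:collectivestrength} is not proved by reduction to the single-form Theorem~\ref{thm:strength} but by reduction, via Proposition~\ref{prop:Sym}, to Theorem~\ref{thm:collectiveprank}, which is proved directly; Theorem~\ref{thm:prank} is then the $m=1$ special case, not an input. More seriously, your descent step has a genuine gap. Granting that a properness argument on $\mathbb{P}^{m-1}$ produces a direction $c\in\ol{K}^m\setminus\{0\}$ with $\ul{\prk}(c\cdot\ul{t})\le r$, you then assert that ``for infinite $K$ a Nullstellensatz-type argument suffices'' to find such a $c$ over $K$. This is precisely the phenomenon the theorem is about: the locus of directions $c$ with $\ul{\prk}(c\cdot\ul{t})\le r$ is a $K$-variety with a $\ol{K}$-point, but nothing forces a $K$-point, and no Nullstellensatz manufactures one (compare Example~\ref{ex:Quadric}, where the low-rank directions are not real). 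Were such a $c\in K^m$ available you would obtain $\prk_K(\ul{t})\ll_d r^{d-1}$ with \emph{no} $m$-dependence at all, so your phrase ``polynomial-in-$m$ losses incurred in the descent step'' is a placeholder for the missing argument rather than a mechanism.

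The paper avoids this entirely by treating the $m$-tuple as a single point of the polynomial functor $T_{[d]}^m$ and applying the shift/embedding machinery of \cite{Draisma18b} (Theorem~\ref{thm:Master} and Corollary~\ref{cor:Master}) directly to the closed subvariety $X_r=\{\ul{t}:\ul{\prk}(\ul{t})\le r\}$. A dimension count (Proposition~\ref{prop:Bound}) gives $X_r(U)\subsetneq T_{[d]}^m(U)$ for $U_i=K^n$ as soon as $n^d>m^2+r(n^{d-1}+n)$, so $n$ is linear in $r+m^{2/d}$; Corollary~\ref{cor:Master} then yields $\prk_K(\ul{t})\le m\sum_{e\le d/2}\binom{d}{e}n^{d-e}\ll_d m\,n^{d-1}\ll_d m^3r^{d-1}$ for any $K$-point at which a suitable partial derivative $h$ is nonzero after some $K$-linear substitution, while the remaining points lie in a proper subvariety and are handled by induction on the degree of the defining equation. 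The factor $m^3$ thus arises concretely from the dimension count. For finite $K$, Proposition~\ref{prop:DegBound} bounds the degree of that equation so that an extension $L$ with $[L:K]\ll_d\log(r+m)$ suffices to find a nonvanishing substitution, and Proposition~\ref{prop:FieldExtension} converts the bound over $L$ into the stated bound over $K$.
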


\subsection{Proof overview}

We will establish de-bordering results for partition rank
and strength by following the approach in \cite{Draisma18b}. Indeed,
there it was shown that if one has any tensor property that is preserved
under linear maps, and a nontrivial polynomial equation satisfied by the
tensors with that property, then one can derive an upper bound on the
partition rank over $K$ of the tensors with that property. In this paper,
the tensor property is ``having border partition rank $\leq r$''. We find
bounds for such a polynomial equation, both in terms of dimensions of
the underlying vector spaces (this suffices for the result over infinite
fields) and in terms of degree of the equation (this is needed for the
result over finite fields), and use these to upper bound the partition
rank over $K$.

We observe that Theorem~\ref{thm:collectiveprank} implies
Theorem~\ref{thm:prank} and Theorem~\ref{thm:collectivestrength}
implies Theorem~\ref{thm:strength}. Furthermore, we
will show that Theorem~\ref{thm:collectiveprank} implies
Theorem~\ref{thm:collectivestrength}. It then suffices to prove
Theorem~\ref{thm:collectiveprank}.

\section{Preliminaries}
\subsection{Comparing strength and partition rank}

Let $V$ be a finite-dimensional vector space over $K$ and write $S^d V$
for the $d$-th symmetric power of $V$. Given any basis $x_1,\ldots,x_n$ of
$V$, the space $S^d V$ is canonically isomorphic to $K[x_1,\ldots,x_n]_d$,
and since strength is basis-invariant, we may transport the notion of
strength to $S^d V$. We have linear maps determined by 
\begin{align*} 
&\pi=\pi_d: V^{\otimes d} \to S^d V, \quad 
v_1 \otimes \ldots \otimes v_d \mapsto v_1 \cdots v_d  \text{ and}\\
&\iota=\iota_d: S^d V \to V^{\otimes d}, \quad v_1 \cdots v_d \mapsto \sum_{\pi \in S_d}
v_{\pi(1)} \otimes \cdots \otimes v_{\pi(d)}.
\end{align*}
These satisfy $\pi(\iota(f))=d! f$.  The following well-known proposition
relates the partition rank of elements in $V^{\otimes d}$ with the
strength of elements in $S^d V$.

\begin{prop} \label{prop:Sym}
For $t \in V^{\otimes d}$ and $f \in S^d V$ we have
\[ s_K(\pi(t)) \leq \prk_K(t) \text{ and }\prk_K(\iota(f)) \leq
D\cdot s_K(f) \text{ where } D:=\binom{d}{\floor{\frac{d}{2}}}. \]
\end{prop}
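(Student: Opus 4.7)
The plan is to unwind the definitions of $\pi$, $\iota$, strength, and partition rank, and to track what each map does to an explicit rank-witnessing decomposition. This is essentially a bookkeeping exercise built around the standard ``polarisation'' identity relating symmetric multiplication on $S^\bullet V$ to the symmetrisation map $\iota$, and I do not expect any genuine obstacle.

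For the first inequality, I would start with a decomposition $t=\sum_{i=1}^{r} a_i \otimes b_i$ witnessing $\prk_K(t)=r$, where each $a_i$ lies in the tensor factors of $V^{\otimes d}$ indexed by some $I_i \subsetneq [d]$ with $1 \le |I_i| \le d-1$, and $b_i$ occupies the complementary factors. The key observation is that
\[
\pi_d(a_i \otimes b_i) \;=\; \pi_{|I_i|}(a_i) \cdot \pi_{d-|I_i|}(b_i)
\]
in $S^d V$. Both sides are bilinear in $(a_i,b_i)$, so it suffices to check this on pure tensors, where each side equals the ordinary product in $S^d V$ of the $d$ linear factors appearing (the ordering imposed by the splitting $(I_i, I_i^c)$ is erased by the symmetric multiplication). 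Applying $\pi_d$ to the partition-rank decomposition of $t$ therefore yields $\pi(t) = \sum_{i=1}^r \pi_{|I_i|}(a_i) \cdot \pi_{d-|I_i|}(b_i)$, an expression of length $r$ as a sum of products of forms of positive degree, and so $s_K(\pi(t)) \le r = \prk_K(t)$.

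For the second inequality, fix a strength decomposition $f = \sum_{i=1}^{r} a_i b_i$ with $\deg a_i = k_i$ and $\deg b_i = d-k_i$, where $k_i \in \{1,\ldots,d-1\}$. The central identity I would establish is
\[
\iota_d(a \cdot b) \;=\; \sum_{\substack{I \subseteq [d] \\ |I| = k}} \iota_k(a)_I \otimes \iota_{d-k}(b)_{I^c},
\]
where $k = \deg a$ and the subscript $I$ (respectively $I^c$) indicates placing the symmetric tensor into the factors of $V^{\otimes d}$ indexed by $I$ (respectively $I^c$) in their natural order. Both sides are bilinear in $(a,b)$, so it is enough to verify the identity when $a = u_1 \cdots u_k$ and $b = w_1 \cdots w_{d-k}$ are products of linear forms; there each side unfolds into a sum of $d!$ tensor monomials indexed by $S_d$, and the two sums match upon grouping permutations by the subset $I$ of positions assigned to the $u$'s together with the induced orderings within $I$ and $I^c$. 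Every summand on the right is a partition-rank-$1$ tensor with respect to the splitting $(I,I^c)$, so
\[
\prk_K\bigl(\iota(a_i b_i)\bigr) \;\le\; \binom{d}{k_i} \;\le\; \binom{d}{\floor{d/2}} \;=\; D.
\]
Summing over $i$ and using subadditivity of partition rank yields $\prk_K(\iota(f)) \le r \cdot D = D \cdot s_K(f)$, as desired. The only place requiring mild care is the combinatorial matching of the $d!$ terms on the two sides of the displayed $\iota_d(a \cdot b)$ identity, which is routine.
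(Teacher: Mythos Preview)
Your proposal is correct and follows essentially the same approach as the paper: reduce by linearity and subadditivity to the rank-$1$ case, then use the identities $\pi_d(a\otimes b)=\pi_e(a)\,\pi_{d-e}(b)$ and $\iota_d(ab)=\sum_{|I|=k}\iota_I(a)\otimes\iota_{I^c}(b)$, bounding the latter sum by $\binom{d}{k}\le D$. The paper's proof is just a more compressed version of what you wrote.
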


\begin{proof}
Since $\pi$ and $\iota$ are linear and strength/partition rank are
subadditive, it suffices to prove these inequalities when $\prk_K(t)=1$
and $s_K(f)=1$, respectively. For $\emptyset \subsetneq I \subsetneq [d]$, $a \in V^{\otimes
I}$ and $b \in V^{\otimes I^c}$
we have $\pi_d(a \otimes b)=\pi_e(a) \pi_{d-e}(b)$ (where $\pi_e$ is
short-hand for the concatenation of an isomorphism $V^{\otimes I} \to V^{\otimes
e}$ induced by a bijection $I \to [e]$ and the map $\pi_e$; and
similarly for $\pi_{d-e}$). This yields the first inequality.

For $a \in S^{e} V$ and $b \in S^{d-e} V$ we have 
\[ \iota(ab)=\sum_{I \subseteq [d],|I|=e} \iota_I(a) \otimes
\iota_{I^c}(b) \]
where $\iota_I: S^e V \to V^{\otimes I}$ stands for the composition of $\iota_e$
and an isomorphism $V^{\otimes e} \to V^{\otimes I}$ coming from a bijection $[e]
\to I$. This shows the second inequality. 
\end{proof}

\begin{re} \label{re:Sym}
Proposition~\ref{prop:Sym} readily implies the corresponding
inequalities 
\[ \ul{s}(\pi(t)) \leq \ul{\prk}(t) \text{ and } \ul{\prk}(\iota(f))
\leq D\cdot \ul{s}(f) \]
for border strength and border partition rank, as well as variants of
these for collective (border) strength and collective (partition) rank. 

For example, assuming for simiplicity that $K$ is infinite so that we
can take closures of sets of $K$-points rather than $K$-points of
image closures of morphisms, the locus $B$ of $m$-tuples of
tensors with collective border partition rank $\leq r$ is the closure
in $(V^{\otimes d})^m$ of the projection of the set
\[ \{((t_1,\ldots,t_m),(c_1,\ldots,c_m)) \mid \prk(\sum_i c_i t_i) \leq r\}
\subseteq (V^{\otimes d})^m \times (K^{m} \setminus \{0\}). \]
Let $\ul{s}=(s_1,\ldots,s_m)$ be a tuple in $B$. Then
$(\pi(s_1),\ldots,\pi(s_m))$ lies in the closure in $(S^d V)^m$ of the projection 
of the set 
\[ \{((\pi(t_1),\ldots,\pi(t_m)),(c_1,\ldots,c_m)) \mid \prk(\sum_i
c_i t_i) \leq r\} \subseteq  (S^d V)^m \times (K^m \setminus \{0\}
) \]
and Proposition~\ref{prop:Sym} implies that the condition $\prk(\sum_i
c_i t_i) \leq r\}$ implies the condition $s(\sum_i c_i \pi(t_i)) \leq r$.
Hence $(\pi(s_1),\ldots,\pi(s_k))$ lies in the closure of the set of
tuples of forms with collective strength $\leq r$.
\end{re}

\begin{prop}
Fix $d$ and $m$. Then Theorem~\ref{thm:collectiveprank} implies
Theorem~\ref{thm:collectivestrength}.
\end{prop}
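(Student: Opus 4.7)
The plan is to reduce Theorem~\ref{thm:collectivestrength} to Theorem~\ref{thm:collectiveprank} by transporting the problem from symmetric tensors to arbitrary tensors via the componentwise maps $\iota_d^{\oplus m}:(S^d V)^m \to (V^{\otimes d})^m$ and $\pi_d^{\oplus m}$ from Proposition~\ref{prop:Sym}, then pulling back using the identity $\pi_d\circ\iota_d = d!\cdot\mathrm{id}$. This pull-back requires $d!$ to be invertible in $K$, which is precisely where the hypothesis $\cha(K)=0$ or $\cha(K)>d$ enters the deduction.

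The crux is to verify that $\ul{\prk}(\iota_d^{\oplus m}(\ul{f})) \le D\cdot \ul{s}(\ul{f})$, where $D:=\binom{d}{\lfloor d/2\rfloor}$ depends only on $d$. For this I would check that, as a regular map, $\iota_d^{\oplus m}$ carries the morphism $\mu_{m,d_1,\ldots,d_r}$ defining border collective strength into a morphism of exactly the shape defining border collective partition rank, at the cost of replacing the strength bound $r$ in the last slot by $Dr$. Concretely, each summand $a_i b_i$ in that last slot expands as $\iota_d(a_i b_i) = \sum_{|I|=\deg a_i} \iota_I(a_i)\otimes \iota_{I^c}(b_i)$, a sum of at most $D$ partition-rank-one tensors (as in the proof of Proposition~\ref{prop:Sym}); the remaining $m-1$ ``dummy'' tensors $\iota_d(h_j)$ are a special case of the arbitrary tensors allowed in the partition-rank morphism, and $\iota_d^{\oplus m}$ commutes with the outer linear combination by $g$. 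Since regular maps send image closures into image closures, $\iota_d^{\oplus m}(\ul{f})$ lies in the border-partition-rank-$\le Dr$ locus.

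Granting this, Theorem~\ref{thm:collectiveprank} applied to $\iota_d^{\oplus m}(\ul{f}) \in (V^{\otimes d})^m$ yields $\prk_K(\iota_d^{\oplus m}(\ul{f})) \le R$ with $R \ll_d m^3 r^{d-1}$ over infinite $K$ (and an additional factor $\log(r+m)$ over finite $K$; the factor $D$ is absorbed into the implicit constant since it depends only on $d$, and $\log(Dr+m)\ll_d \log(r+m)$). By the definition of collective partition rank, some $c\in K^m\setminus\{0\}$ satisfies $\prk_K(\iota_d(c\cdot \ul{f})) \le R$. Applying $\pi_d$ and using $\pi_d(\iota_d(c\cdot\ul{f})) = d!\cdot(c\cdot\ul{f})$ together with the first inequality of Proposition~\ref{prop:Sym}, we obtain $s_K(c\cdot\ul{f}) \le \prk_K(\iota_d(c\cdot\ul{f})) \le R$, whence $s_K(\ul{f}) \le R$. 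The only nontrivial point is the border-compatibility in the second paragraph; the rest is a clean translation via the $\pi/\iota$ formalism.
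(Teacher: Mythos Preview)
Your proposal is correct and follows essentially the same route as the paper: apply $\iota$ componentwise to pass from border collective strength $r$ to border collective partition rank $\leq Dr$ (the paper phrases this via Proposition~\ref{prop:Sym} and continuity of $\iota$, while you unpack the parameterisations explicitly, but the content is identical), invoke Theorem~\ref{thm:collectiveprank}, and then use $\pi\circ\iota=d!\cdot\mathrm{id}$ together with $s_K(\pi(-))\le\prk_K(-)$ to descend back. Your treatment of the constant $D$ and of the finite-field $\log$ factor is also handled the same way.
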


\begin{proof}
Assume Theorem~\ref{thm:collectiveprank} and consider
\[ \ul{f}=(f_1,\ldots,f_m) \in K[x_1,\ldots,x_n]_d^m \cong (S^d V)^m. \]
Set $r:=\ul{s}(\ul{f})$, so $f$ lies in the Zariski closure of the set of
$m$-tuples of degree-$d$ forms over $\ol{K}$ of collective strength $\leq
r$. Applying $\iota$ over $\ol{K}$ and using Remark~\ref{re:Sym},
we find that $\ul{\prk}(\ul{t}) \leq D r$. 

Under the assumptions of Theorem~\ref{thm:collectivestrength}, $d!$ is
invertible in $K$, so that 
\[ s_K(\ul{f})=s_K(d!\ul{f})=s_K(\pi(\iota(f_1)),\ldots,\pi(\iota(f_m)))
\leq \prk_K(\ul{t}) \leq Q_m(Dr), \]
where the first inequality uses Remark~\ref{re:Sym} and the second
inequality uses Theorem~\ref{thm:collectiveprank}.
\end{proof}

\subsection{Behaviour under finite field extensions}

We record a simple and well-known relation between the (collective)
partition rank over $K$ and that over a field extension of $K$.

\begin{prop} \label{prop:FieldExtension}
Let $\ul{t} \in (V_1 \otimes \cdots \otimes V_d)^m$ and let $L$ be an extension
of $K$ with $e:=[L:K]<\infty$. Then the collective partition rank
satisfies $\prk_K(\ul{t}) \leq e \cdot \prk_L(\ul{t})$.
\end{prop}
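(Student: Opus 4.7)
The plan is to take an $L$-decomposition witnessing $\prk_L(\ul{t}) = r$ and descend it to $K$ by expanding coefficients in a $K$-basis of $L$. Fix a $K$-basis $e_1,\ldots,e_e$ of $L$, and pick $c=(c_1,\ldots,c_m)\in L^m\setminus\{0\}$ with $\prk_L(\sum_i c_i t_i)\le r$. Writing $c_i=\sum_k c_{i,k}e_k$ with $c_{i,k}\in K$, set $V:=V_1\otimes\cdots\otimes V_d$ and $s_k:=\sum_i c_{i,k} t_i \in V$. Since $c\ne 0$, some ``column'' $(c_{1,k_0},\ldots,c_{m,k_0})\in K^m$ is nonzero, so it suffices to show $\prk_K(s_{k_0})\le e\cdot r$.

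Next I would view everything inside $V_L=L\otimes_K V$. On the one hand, expanding the scalars $c_i$ in the basis $(e_k)$ shows that $\sum_i c_i t_i=\sum_k e_k\otimes s_k$ in $V_L$. On the other hand, an $L$-decomposition $\sum_i c_i t_i=\sum_{j=1}^r a_j\otimes b_j$ (with $a_j\in (V_{I_j})_L$, $b_j\in (V_{I_j^c})_L$) can be expanded via $a_j=\sum_k e_k\otimes a_{j,k}$ and $b_j=\sum_l e_l\otimes b_{j,l}$. Using $e_k e_l=\sum_m \alpha_{k,l,m} e_m$ together with the canonical isomorphism $(V_{I_j})_L\otimes_L(V_{I_j^c})_L\cong V_L$, one checks that
\[ a_j\otimes b_j=\sum_m e_m\otimes\Big(\sum_l \Big(\sum_k \alpha_{k,l,m}\, a_{j,k}\Big)\otimes b_{j,l}\Big). \]
Comparing coefficients of $e_m$ on the two sides of $\sum_j a_j\otimes b_j=\sum_k e_k\otimes s_k$ yields $s_m=\sum_{j,l}\big(\sum_k\alpha_{k,l,m}\,a_{j,k}\big)\otimes b_{j,l}$, which exhibits $s_m$ as a sum of at most $re$ partition-rank-one $K$-tensors in $V_{I_j}\otimes V_{I_j^c}$. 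Hence $\prk_K(s_{k_0})\le re$, which combined with the nonvanishing of the column $(c_{i,k_0})_i$ gives $\prk_K(\ul{t})\le re$.

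The only subtle point is producing the factor $e$ rather than $e^2$: a naive expansion of $a_j\otimes b_j$ as $\sum_{k,l}e_k e_l\otimes(a_{j,k}\otimes b_{j,l})$ would contribute $e^2$ rank-one $K$-terms per $L$-term, giving the weaker bound $\prk_K(\ul{t})\le e^2r$. The fix, already visible in the displayed formula above, is to group the sum as $\sum_l\big(\sum_k \alpha_{k,l,m}\,a_{j,k}\big)\otimes b_{j,l}$ and treat the inner $K$-linear combination as a single element of $V_{I_j}$; this collapses the double sum over $(k,l)$ to a single sum over $l$.
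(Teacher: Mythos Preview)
Your argument is correct, and it rests on the same basic idea as the paper's: expand an $L$-partition-rank decomposition in a $K$-basis of $L$ and collapse one of the two basis sums to avoid the $e^2$ loss. The execution, however, differs in a way worth noting. The paper normalises so that $c_m=1$, expands only the $a_i$ in the basis $(z_j)$, and then observes that the resulting identity
\[
t_m+\sum_{k<m} c_k t_k=\sum_{i,j} a_{ij}\otimes_L (z_j b_i)
\]
is a \emph{linear} system over $K$ in the unknowns $c_k$ and the coordinates of the right-hand factors; since it has a solution over $L$, it has one over $K$. Your route is instead fully explicit: you expand $c_i$, $a_j$, and $b_j$ simultaneously, use the structure constants of $L/K$, and read off a concrete $K$-decomposition of each $s_m$. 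Your version is constructive and slightly longer; the paper's is a one-line appeal to ``$K$-linear system solvable over $L$ $\Rightarrow$ solvable over $K$''. Both yield exactly $e\cdot r$ terms.
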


\begin{proof}
Write 
\[ \sum_{k=1}^m c_k t_k=\sum_{i=1}^r a_i \otimes_L b_i \]
where $r:=\prk_L(\ul{t})$, $a_i \in L \otimes_K V_{I_i}$, $b_i \in L
\otimes_K V_{I_i^c}$, and $c_k \in L$, where the the $c_k$ are not
all zero. Assume, without loss of generality, that $c_m=1$. 
Let $z_1,\ldots,z_e$ be a basis of $L$ over
$K$. Then we can write $a_i=\sum_j z_j a_{ij}$ for certain tensors $a_{ij}
\in V_{I_i}$. We then have
\[ t_m + \sum_{k=1}^{m-1} c_k t_k=\sum_{i=1}^r \sum_{j=1}^e a_{ij} \otimes_L (z_j b_i). \]
The $c_k$ and $z_j b_i$ are a solution over $L$ to a certain system
of linear equations with coefficients from $K$. This system then also has a solution
over $K$, and such a solution witnesses $\prk_K(\ul{t}) \leq e \cdot r$. 
\end{proof}

\begin{re}
The corresponding statement also holds for strength, with the same
proof. The implication of our main theorems that the (collective)
strength or partition rank over $K$ can be bounded by the
corresponding quantity over $\ol{K}$ is therefore interesting only
for fields $K$ for which $\ol{K}$ is not finite-dimensional over $K$,
i.e., by the Artin-Schreier theorem, for fields that are not
real closed or algebraically closed.
\end{re}

\subsection{Subvarieties of tensor products}

It is convenient, for the time being, to assume that $K$ is infinite.
We will later explain how the reasoning can be adjusted to finite fields.
Let $\Vec$ be the category of finite-dimensional $K$-vector spaces, 
let $d$ be a natural number, and denote by $\Vec^d$ the category whose
objects and morphisms are $d$-tuples of objects and morphisms in
$\Vec$, respectively. 

\begin{de}
A functor $P:\Vec^d \to \Vec$ is {\em polynomial of degree $\leq e$} if
for any $V,W \in \Vec^d$ the map $P:\Hom(V,W) \mapsto \Hom(P(V),P(W))$
is polynomial of degree $\leq e$.
\end{de}

By a polynomial functor $\Vec^d \to \Vec$ we will always mean one that
is polynomial of degree at most some $e$. In fact, the most interesting
ones for this paper are the functors
\[ T_I:\Vec^d \to \Vec, (V_1,\ldots,V_d) \mapsto \bigotimes_{i \in I}
V_i=V_I \]
where $I$ is a subset of $[d]$.

We will think of $P(V)$ not just as a $K$-vector space, but also as the
spectrum of the symmetric algebra of $P(V)^*$, an affine scheme of finite
type over $K$. For $\phi \in \Hom_{\Vec^d}(V,W)$ the linear map $P(\phi)$
induces a morphism $P(V) \to P(W)$ of affine schemes over $K$.

\begin{de}
A {\em closed subvariety} of a polynomial functor $P:\Vec^d \to \Vec$ is
a rule $X$ that assigns to every tuple $V \in \Vec^d$ a reduced, closed
subscheme $X(V) \subseteq P(V)$ in such a manner that for any $\phi:V
\to W$ the map $P(\phi)$ maps $X(V)$ into $X(W)$. We then write $X(\phi)$
for the restriction $X(V) \to X(W)$ of the morphism $P(\phi)$ to $X(V)$.

Let $X,Y$ be closed subvarieties of polynomial functors $\Vec^d \to \Vec$. A
{\em morphism} $\alpha:X \to Y$ is the data of a morphism $\alpha_V:X(V)
\to Y(V)$ of affine algebraic varieties over $K$ for all $V \in \Vec^d$
in such a manner that for all $V,W \in \Vec^d$ and all $\phi \in
\Hom_{\Vec^d}(V,W)$ the following diagram commutes:\\
\[
\xymatrix{
X(V) \ar[r]^{\alpha_V} \ar[d]_{X(\phi)} & Y(V) \ar[d]^{Y(\phi)} \\
X(W) \ar[r]_{\alpha_W} & Y(W). 
}
\]
\end{de}

The most important example of a closed subvariety for us lives in
$T_{[d]}^m$ ($m$-tuples of tensors) and is defined by
\[ X_r(V) := \{\ul{t} \in T_{[d]}^m(V) \mid \ul{\prk}(\ul{t}) \leq r\}; \]
the variety of $m$-tuples of tensors of border partition rank $\leq r$. 

\begin{de}
Given a tuple $U \in \Vec^d$ and a polynomial functor $P:\Vec^d \to
\Vec$, we get a new polynomial functor $\Sh_U P:\Vec^d \to \Vec$, the
{\em shift of $P$ over $U$}, by setting
\[ (\Sh_U P)(V):=P(U \oplus V)
\text{ and } (\Sh_u P)(\phi):=P(\id_U \oplus \phi). \]
Furthermore, if $X$ is a closed subvariety of $P$, then we define $\Sh_U X$
as $(\Sh_U X)(V):=X(U \oplus V) \subseteq (\Sh_U P)(V)$; this is a closed
subvariety of $\Sh_U P$.
\end{de}

The following important example of a shift will be used intensively.

\begin{ex}
Since tensor products distribute over direct sums, we have
\[ (\Sh_U T_{[d]})(V)  = \bigoplus_{I \subseteq [d]} U_{I^c} \otimes
V_I, \]
so $\Sh_U T_{[d]}$ is a direct sum, over all subsets $I$ of $[d]$,
of $\prod_{i \in I^c} (\dim(U_i))$ copies of $T_I$. Note that there is
precisely one summand of degree $d$, namely, that with $I=[d]$; this
summand 
is $U_\emptyset \otimes V_{[d]}$, which we identify with
$V_{[d]}=T_{[d]}(V)$. The
quotient $(\Sh_U T_{[d]})/T_{[d]}$ is a polynomial functor of degree
$d-1$.

Similarly, for any subset $I \subseteq [d]$, $\Sh_U T_I^m$
equals $T_I^m$ plus a direct sum of copies of $T_{I'}^{m'}$ 
where $I' \subseteq I$ and where $m'<m$ if $I'=I$. 
\end{ex}

\begin{de}
Given a closed subvariety $X$ of a polynomial functor $P$ and a function
$h \in K[X(0)]$, we may think of $h$ as a function on every $X(V)$ via
the composition with the map $X(0_{V \to 0}):X(V) \to X(0)$. We define
the functor $X[1/h]$ from $\Vec^d$ to affine $K$-schemes by sending
$V$ to the basic open subset of $X(V)$ defined by the nonvanishing
of $h$.  This is a closed subset of the polynomial functor $V \mapsto
K \oplus P(V)$ defined by the equations for $X(V) \subseteq P(V)$ and
the equation $y \cdot h = 1$, where $y$ is the coordinate on the affine
line corresponding to the summand $K$. 
\end{de}

\section{Proof of Theorem~\ref{thm:collectiveprank} for infinite $K$}
\label{sec:Infinite}

We continue to assume that $K$ is infinite. We have 
\[ T_{[d]}^m (K^{n_1},\ldots,K^{n_d})=(K^{n_1} \otimes \cdots
\otimes K^{n_d})^m. \]
In this space we have the standard basis vectors 
\[ e_{(i_1,\ldots,i_d),\ell}
:=(0,\ldots,0,e_{i_1} \otimes \cdots \otimes e_{i_d},0,\ldots), \]
where the nonzero entry is in position $\ell \in [m]$ and where
the $e_{i_j}$ are the standard basis vectors in $K^{n_j}$. We write
$x_{(i_1,\ldots,i_d),\ell}$ for the corresponding standard coordinates.

The following theorem is a summary of results proved in \cite[\S
4]{Draisma18b}, but generalised from tensors to $m$-tuples of tensors.
Informally, the theorem says that on an open subset where a partial
derivative of a defining function for a closed subset $X \subseteq
T^m_{[d]}$ is nonzero, the coordinates of the $m$-th tensor can be
reconstructed from those of the remaining tensors. 

\begin{thm} \label{thm:Master}
Let $X \subseteq T_{[d]}^m$ be a closed subvariety and assume that $X$
is defined over the prime field of $K$ in the following sense: for all
$n_1,\ldots,n_d$, $X(K^{n_1},\ldots,K^{n_d}) \subseteq (K^{n_1} \otimes
\cdots \otimes K^{n_d})^m$ is the zero set of polynomials in the standard
coordinates with coefficients from the prime field $F$ of $K$.

Let $U=(U_1,\ldots,U_d) \in \Vec^d$ with $U_j=K^{n_j}$ be such that
$X(U) \subsetneq (T_{[d]}(U))^m$. Let $f$ be a nonzero polynomial with
coefficients in $F$ that vanishes identically on $X(U)$.  Let $x_1$
be the standard coordinate $x_{(1,\ldots,1),m}$ and set
\[ h:=\frac{\partial f}{\partial x_1}. \]
Let $M$ be the $m$-th copy of $T_{[d]}$ in $T_{[d]}^m$. It is then
naturally a summand of $\Sh_U T_{[d]}^m$, and the projection $\Sh_U
T_{[d]}^m \to (\Sh_U T_{[d]}^m)/M$ restricts to a closed embedding,
defined over $F$,
\[ \psi: (\Sh_U X)[1/h] \to ((\Sh_U T_{[d]}^m)/M)[1/h]. \]
Furthermore, there exists a morphism, defined over $F$,
\[ \sigma: ((\Sh_U T_{[d]}^m)/M)[1/h] \to (\Sh_U T_{[d]}^m)[1/h] \]
such that $\sigma \circ \psi$ is the identity on $(\Sh_U X)[1/h]$. 
\end{thm}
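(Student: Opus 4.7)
The plan is to follow the strategy of \cite{Draisma18b}, generalized to $m$-tuples: build the retraction $\sigma$ explicitly by polarizing $f$ along the shifted part of the space, so that a single coefficient of the resulting polynomial identity isolates the $M$-coordinates with leading factor $h$.

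First I would introduce the spreading construction. For each tuple $c=(c_1,\ldots,c_d)$ of linear maps $c_j \in \Hom_K(V_j,U_j)$, let $\phi_c:U \oplus V \to U$ be the linear map equal to $\id$ on $U$ and to $c_j$ on each $V_j$. By functoriality of $X$, the map $T_{[d]}^m(\phi_c)$ sends $(\Sh_U X)(V)=X(U \oplus V)$ into $X(U)$, so
\[ F(t,c) := f\bigl(T_{[d]}^m(\phi_c)(t)\bigr) \]
vanishes identically on $(\Sh_U X)(V)$ for every value of $c$. Since $\phi_c$ and $f$ are defined over the prime field $F$, every coefficient of $F(t,c)$ as a polynomial in the entries of $c$ is a polynomial over $F$ in the coordinates of $(\Sh_U T_{[d]}^m)(V)$ that vanishes on $(\Sh_U X)(V)$.

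Next I would extract the critical coefficient. Decompose $(\Sh_U T_{[d]}^m)(V) = \bigoplus_{l \in [m]}\bigoplus_{I \subseteq [d]} U_{I^c} \otimes V_I$ and denote the corresponding coordinate blocks by $w^{I,l}$. A chain-rule expansion shows that, for each multi-index $\tilde b = (\tilde b_1,\ldots,\tilde b_d)$, the coefficient of the monomial $(c_1)_{1,\tilde b_1}\cdots(c_d)_{1,\tilde b_d}$ in $F(t,c)$ equals
\[ \sum_{l=1}^m \frac{\partial f}{\partial x_{(1,\ldots,1),l}}(w^{\emptyset})\cdot w^{[d],l}_{\tilde b} \;+\; R_{\tilde b}, \]
where $w^{\emptyset}$ is the $I=\emptyset$ part of $t$ (the projection of $t$ to $T_{[d]}^m(U)$) and $R_{\tilde b}$ is a polynomial over $F$ in the blocks $w^{I,l}$ with $I \subsetneq [d]$. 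The first sum collects the terms where all $d$ derivatives in $c$ act on a single $x$-variable, forcing that variable to be some $x_{(1,\ldots,1),l}$ and yielding a $w^{[d],l}_{\tilde b}$ factor; distributing the derivatives across $\geq 2$ variables produces only $w^{I,l}$ with $I \subsetneq [d]$.

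Finally, since $\partial f/\partial x_{(1,\ldots,1),m}(w^{\emptyset}) = h(w^{\emptyset})$, the vanishing of this coefficient on $(\Sh_U X)(V)$ yields on the locus $\{h \neq 0\}$ an explicit formula for $w^{[d],m}_{\tilde b}$ purely in terms of coordinates outside $M$. Packaging these formulas (and fixing all non-$M$ coordinates) defines an $F$-morphism $\sigma:((\Sh_U T_{[d]}^m)/M)[1/h] \to (\Sh_U T_{[d]}^m)[1/h]$ satisfying $\sigma \circ \psi = \id$ on $(\Sh_U X)[1/h]$; the existence of this retraction forces $\psi$ to be injective with closed image $\sigma^{-1}((\Sh_U X)[1/h])$, hence a closed embedding. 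The main obstacle is the combinatorial bookkeeping in the second step: verifying that all ``off-diagonal'' contributions to the chosen $c$-monomial involve only $w^{I,l}$ with $I \subsetneq [d]$, which is precisely the condition that lets the formula for $w^{[d],m}_{\tilde b}$ descend to a well-defined morphism on the quotient by $M$.
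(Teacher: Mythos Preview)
Your proposal is correct and follows essentially the same route the paper intends: the paper does not supply its own proof of this theorem but states it as ``a summary of results proved in \cite[\S 4]{Draisma18b}, but generalised from tensors to $m$-tuples of tensors,'' and your argument is precisely that generalisation---spread $f$ along $\phi_c$, extract the coefficient of a multilinear $c$-monomial, and observe that the only degree-$d$ contributions hit $x_{(1,\ldots,1),l}$ to produce $\sum_l \partial_{x_{(1,\ldots,1),l}} f(w^{\emptyset})\, w^{[d],l}_{\tilde b}$, while all remaining terms involve only $w^{I,l}$ with $I\subsetneq[d]$. Solving for $w^{[d],m}_{\tilde b}$ on $\{h\neq 0\}$ then defines $\sigma$ over $F$, and the retraction identity forces $\psi$ to be a closed embedding; this matches the paper's cited approach and its later use of the ``covariants'' paragraph \cite[\S4.8]{Draisma18b}.
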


\begin{re}
For $m=1$ the fact that this projection restricts to a closed embedding
is \cite[\S 4.6]{Draisma18b} together with \cite[Lemma 33]{Draisma18b},
and this latter lemma also gives the map $\sigma$, which is called $\Psi$
there. The generalisation to $m$ copies is straightforward.

A difference with \cite{Draisma18b} is that there it is
required that $f$ is homogeneous of minimal degree, but neither of those
properties is necessary for the statement above.  However, if one does
not impose that $f$ is of minimal degree, then it may be that $(\Sh_U
X)[1/h]$ is empty and the theorem holds trivially.  This is no problem,
because in the application below, we then replace $X$ by its subvariety
defined by $h$, which is of lower degree than $f$.
\end{re}

\begin{cor} \label{cor:Master}
In the setting of Theorem~\ref{thm:Master}, for any $V \in \Vec^d$ and
any $K$-point $\ul{t}$ of $X(V)$ for which there exists
an $m$-tuple $\phi:V \to U$ of 
$K$-linear maps with 
$h(T_{[d]}^m(\phi)\ul{t}) \neq 0$, the collective partition
rank $\prk_K(\ul{t})$ of $\ul{t}$ is at most 
\begin{equation}  \label{eq:HarderBound}
m\cdot \sum_{I \subseteq [d]: 1 \leq |I| \leq \floor{d/2}}
\prod_{j \in I^c} n_j  
\end{equation}
\end{cor}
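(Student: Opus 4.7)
The plan is to apply Theorem~\ref{thm:Master} to a shifted copy of $\ul{t}$ and then extract a low-partition-rank decomposition of the $m$-th slot $\ul{t}_m$ from the section $\sigma$. Since taking $c_m = 1$ and $c_\ell = 0$ for $\ell < m$ in the definition of collective partition rank yields $\prk_K(\ul{t}) \leq \prk_K(\ul{t}_m)$, it suffices to bound $\prk_K(\ul{t}_m)$.

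First I would form the $d$-tuple of inclusions $\widetilde\phi_j := (\phi_j, \id_{V_j}) : V_j \to U_j \oplus V_j$ and set $\tilde{\ul{t}} := T_{[d]}^m(\widetilde\phi)(\ul{t}) \in T_{[d]}^m(U \oplus V) = (\Sh_U T_{[d]}^m)(V)$. Because $X$ is functorial, $\tilde{\ul{t}}$ lies in $(\Sh_U X)(V)$; and because $\widetilde\phi$ composed with the projection $U \oplus V \twoheadrightarrow U$ equals $\phi$, the pullback of $h$ to $\tilde{\ul{t}}$ is $h(T_{[d]}^m(\phi)\ul{t}) \neq 0$. Hence $\tilde{\ul{t}} \in (\Sh_U X)[1/h](V)$, and Theorem~\ref{thm:Master} gives $\tilde{\ul{t}} = \sigma(\psi(\tilde{\ul{t}}))$. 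Using the canonical decomposition $T_{[d]}(U \oplus V) = \bigoplus_{I \subseteq [d]} U_{I^c} \otimes V_I$, write $\tilde{\ul{t}}_\ell = \sum_I \tilde{\ul{t}}_{\ell, I}$ with $\tilde{\ul{t}}_{\ell, [d]} = \ul{t}_\ell$; for every $\emptyset \neq I \subsetneq [d]$, any element of $U_{I^c} \otimes V_I$ has partition rank at most $\prod_{j \in I^c} n_j$ by expansion along a basis of $U_{I^c}$.

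Taking the $M$-component of $\tilde{\ul{t}} = \sigma(\psi(\tilde{\ul{t}}))$ yields $\ul{t}_m = \sigma_M(\psi(\tilde{\ul{t}}))$, where $\sigma_M$ is a morphism into $V_{[d]}$ given, over the prime field, as a polynomial in the coordinates of its argument divided by a power of $h$. Because $\sigma$ is natural in $V$ and the target is the degree-$d$ functor $V_{[d]}$, every monomial in the expansion of $\sigma_M$ must combine its input factors (drawn from the pieces $\tilde{\ul{t}}_{\ell, I}$ with $(\ell, I) \neq (m, [d])$) so that the total $V$-degree equals $d$, while $U$-indices get contracted against the purely-$U$ components. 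Each such monomial evaluated at $\psi(\tilde{\ul{t}})$ is therefore a sum of simple tensors in $V_{[d]}$, each of partition rank at most $\prod_{j \in I^c} n_j$ for one of the subsets $I$ appearing. Counting these contributions across the $m$ slots and using the $I \leftrightarrow I^c$ symmetry to fold the full sum over $\emptyset \neq I \subsetneq [d]$ into its half with $|I| \leq \lfloor d/2 \rfloor$ produces the bound $\prk_K(\ul{t}_m) \leq m \cdot \sum_{I : 1 \leq |I| \leq \lfloor d/2 \rfloor} \prod_{j \in I^c} n_j$. Since $\ul{t}_m \in V_{[d]}$, its partition rank inside $T_{[d]}(V)$ matches that inside $T_{[d]}(U \oplus V)$ via the linear projection $U \oplus V \twoheadrightarrow V$, giving the corollary.

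The main obstacle is this final counting step. One must unpack the construction of $\sigma$ from the proof of Theorem~\ref{thm:Master} — an implicit-function manipulation starting from the defining polynomial $f$ of $X(U)$ — precisely enough to exhibit the monomials in $\sigma_M$ as pure tensors indexed by pairs $(\ell, I)$, to verify that the $|I| > \lfloor d/2 \rfloor$ contributions really are absorbed by the $|I| \leq \lfloor d/2 \rfloor$ ones, and to check that the resulting count matches the displayed bound exactly rather than up to a multiplicative constant.
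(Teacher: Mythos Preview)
Your setup matches the paper's: form $\tilde{\ul{t}}$ via $\phi\oplus\id_V$, land in $(\Sh_U X)[1/h]$, and read off the $M$-component of $\sigma(\psi(\tilde{\ul{t}}))$. The gap is in the very first reduction. You write ``it suffices to bound $\prk_K(\ul{t}_m)$,'' but this is exactly what cannot be done. The source $(\Sh_U T_{[d]}^m)/M$ still contains the $m-1$ undeleted copies of $T_{[d]}$, namely the summands $\tilde{\ul{t}}_{\ell,[d]}=t_\ell$ for $\ell<m$. By the covariant description you invoke, the component of $\sigma_M$ mapping each such copy to $M=T_{[d]}$ is a scalar multiple $c_\ell$ of the identity (these are the $k=1$ terms, where a single $I=[d]$ already partitions $[d]$). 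Hence
\[
t_m=\sigma_M(\psi(\tilde{\ul{t}}))=\sum_{\ell<m} c_\ell\, t_\ell \;+\; (\text{contributions from partitions with }k\ge 2).
\]
Nothing in the hypotheses bounds $\prk_K(t_\ell)$ for $\ell<m$ individually (take, e.g., $m=2$ and $t_1=t_2$ of enormous partition rank; the collective rank is $0$ but each slot is large). So your sum over $\emptyset\neq I\subsetneq[d]$ silently drops the $I=[d]$ pieces, and the bound on $\prk_K(t_m)$ is false in general.

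The paper's remedy is precisely to move those $k=1$ terms across: set $t':=t_m-\sum_{\ell<m} c_\ell t_\ell$, which is a nontrivial $K$-linear combination (coefficient of $t_m$ is $1$), and bound $\prk_K(t')$. Now only $k\ge2$ contributions remain, every such term has some factor in a copy of $T_I$ with $|I|\le\lfloor d/2\rfloor$, and grouping by that copy gives one partition-rank-$1$ summand per copy; the number of copies is exactly $m\cdot\sum_{1\le|I|\le\lfloor d/2\rfloor}\prod_{j\in I^c}n_j$. Your ``fold by $I\leftrightarrow I^c$ symmetry'' is a heuristic for the same count, but the clean mechanism is this: count copies of $T_I$ with small $|I|$ in $\Sh_U T_{[d]}^m$, not monomials in $\sigma_M$.
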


\begin{proof}
Let $\tilde{\ul{t}}$ be the image of $\ul{t}$ in $T_{[d]}^m(U \oplus V)$ under
the linear map $T_{[d]}^m(\phi \oplus \id_V)$. So the $i$-th component
of $\tilde{\ul{t}}$ equals $t_i \in V_{[d]}$ plus a remainder in 
\[ \bigoplus_{I \subsetneq [d]} U_{I^c} \otimes V_I
= \sum_{i=1}^d U_i \otimes \bigotimes_{j \neq i} (U_j \oplus V_j). \]
We analyse $t_m$, which is the part of $\tilde{t}_m$ in $M$.

We have 
\[ h(\tilde{t})=h(T^m_{[d]}(\phi)\ul{t})\neq 0, \]
so that we may regard
$\tilde{\ul{t}}$ as a $K$-point of $(\Sh_U X)(V)[1/h]$. Therefore, 
Theorem~\ref{thm:Master} yields the equality
$\tilde{t}=\sigma(\psi(\tilde{t}))$. The $M$-component
of $\sigma$ is a morphism $((\Sh_U T_{[d]}^m)/M)[1/h] \to M=T_{[d]}$ and
hence, by basic representation theory for $\prod_i \GL(V_i)$, a linear
combination (with coefficients in $K[T_{[d]}^m(U)][1/h]$) of morphisms
that take the components in some summands $T_{I_1},\ldots,T_{I_k}$
for which $I_1 \sqcup \cdots \sqcup I_k=[d]$ and returns their tensor
product. (See the paragraph \cite[\S 4.8]{Draisma18b} on covariants
for details.)

Among these morphisms, those with $k=1$ are isomorphisms from the
$m-1$ copies of $T_{[d]}$ in $\Sh_U T_{[d]}^m/M$; this is where
$t_1,\ldots,t_{m-1}$ live. So for suitable $c_1,\ldots,c_{m-1} \in K$,
$t':=t_m+\sum_{i=1}^{m-1} c_i t_i$ is a linear combination of
tensors in the images of those morphisms with $k \geq 2$. For these, the smallest
$I_j$ has cardinality at most 
$\floor{\frac{d}{2}}$. So we find that $t'$
has partition rank at most 
the number of (copies of) $T_I$
with $|I| \leq \floor{\frac{d}{2}}$ in $\Sh_U T_{[d]}^m$, which is
\[ m\cdot\sum_{I \subseteq [d]: 1 \leq |I| \leq \floor{d/2}} \prod_{j \in I^c} n_j.
\]
Thus the collective partition rank of $\ul{t}$ is bounded by
the number above.
\end{proof}

We record the following corollary of the proof.
\begin{de}
A {\em proper contraction}
of $\ul{t} \in T_{[d]}(V)^m$ is any tensor of the form
\[ \sum_{i=1}^m \xi_i(t_i) \in T_{I}(V), \]
where $I$ is a nonempty proper subset of $[d]$, and $\xi_i \in
T_{I^c}(V^*)$ for all $i$.
\end{de}

\begin{cor} \label{cor:Algebra}
There exists an $N$ such
that for any $\ul{t}$ as in Corollary~\ref{cor:Master} there is a $K$-linear
combination $t' = t_m+\sum_{i=1}^{m-1} c_i t_i$ which is contained in a subalgebra of \linebreak 
$T(V) = \oplus_{e=0}^\infty T_{[d]}(V)$ generated by  
$ \le \sum_{j=1}^d n_j + m\cdot \prod_{j\in [d]} (n_j+1)$ proper
contractions of $\ul{t}$. 
\end{cor}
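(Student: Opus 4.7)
The plan is to revisit the proof of Corollary~\ref{cor:Master} and read off, on top of the partition rank bound, an explicit algebraic description of $t'$. That proof expressed the $M$-component of the splitting $\sigma$ as a $K[T_{[d]}^m(U)][1/h]$-linear combination of covariant morphisms indexed by ordered sequences $I_1, \ldots, I_k$ of subsets of $[d]$ with $I_1 \sqcup \cdots \sqcup I_k = [d]$; each such morphism tensors together one component from each of $k$ specified $T_{I_j}$-summands of $(\Sh_U T_{[d]}^m)/M$. Evaluated at $\tilde{\ul{t}}$, the $k=1$ terms yield the $c_i t_i$ for $i<m$, while the $k \geq 2$ terms exhibit $t' := t_m + \sum_{i<m} c_i t_i$ as a $K$-linear combination of tensor products of such components.

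Next I would identify each component as a proper contraction of $\ul{t}$. A copy of $T_I$ inside $\Sh_U T_{[d]}$ sits in the summand $U_{I^c} \otimes T_I$, and the projection of $\tilde{t}_\ell = T_{[d]}(\phi \oplus \id_V) t_\ell$ onto it against a basis vector $u^* \in U_{I^c}^*$ equals $\xi(t_\ell)$, where $\xi \in T_{I^c}(V^*)$ is the tensor product of the $\phi_j^* u_j^*$ for $j \in I^c$. When $\emptyset \subsetneq I \subsetneq [d]$ this is, by definition, a proper contraction of $\ul{t}$ (set $\xi_i = 0$ for $i \neq \ell$); when $I = \emptyset$ the ``component'' is a scalar, which we absorb into the $K$-coefficient of the term. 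Thus $t'$ is a $K$-linear combination of products of proper contractions of $\ul{t}$, and so lies in the $K$-subalgebra of $T(V)$ that they generate.

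A distinct proper contraction is determined by a choice of $\ell \in [m]$, of $I$ with $\emptyset \subsetneq I \subsetneq [d]$, and of a basis vector of $U_{I^c}^*$, giving at most
\[
m \cdot \sum_{\emptyset \subsetneq I \subsetneq [d]} \prod_{j \in I^c} n_j \;\le\; m \cdot \prod_{j=1}^d (n_j+1)
\]
generators, comfortably within the claimed bound $\sum_{j=1}^d n_j + m \prod_{j=1}^d (n_j+1)$. The main obstacle I anticipate is making the covariant description of $\sigma$ in \cite[\S 4.8]{Draisma18b} explicit enough to see that its factors really are these concrete contractions; this is essentially a bookkeeping exercise with the direct-sum decomposition of $\Sh_U T_{[d]}$ and the definition of $\phi \oplus \id_V$.
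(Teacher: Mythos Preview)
Your proposal is correct and in fact slightly more direct than the paper's argument. The paper first reduces to the case where $\ul{t}$ is concise, chooses an internal splitting $V=U'\oplus V'$ with $V'_j=\ker\phi_j$ and $U'_j\subseteq U_j$, and then writes $t'=\sum_u u\otimes t'_u$ where $u$ ranges over monomials in chosen bases of the $U'_j$. The monomials $u$ themselves contribute the $\sum_{j=1}^d n_j$ generators (they are elements of the $V_j$, hence proper contractions by conciseness), while each coefficient $t_{i,u}$ is unpacked via an inclusion--exclusion identity into a combination of contractions $\xi_{u'}(t_i)$, accounting for the $m\prod_j(n_j+1)$ term. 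Your route, by contrast, stays in $U\oplus V$ and reads off the covariant expansion of $\sigma$ directly: each $T_I$-component of $\tilde{t}_\ell$ is already a proper contraction of $t_\ell$, via the pullback $\phi^*u^*\in T_{I^c}(V^*)$. This sidesteps both the conciseness reduction and the inclusion--exclusion, and actually yields only the second summand $m\prod_j(n_j+1)$ in the count. The paper's detour buys a very explicit generating set tied to a fixed basis of a subspace of $V$; your approach buys brevity and a marginally sharper bound.
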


Such a tensor product is of the form $s_1 \otimes \cdots \otimes s_e$
where $e \geq 2$, $s_i \in T_{I_i}(V)$ and $I_1,\ldots,I_e$ form a
partition of $[d]$ into nonempty sets. 

\begin{proof}
Without loss of generality, $\ul{t}$ is {\em concise}: each
$V_j$ is the smallest space $V_j' \subseteq V_j$ with $\ul{t} \in
T_{[d]}^m(V_1,\ldots,V_{j-1},V_j',V_{j+1},\ldots,V_d)$. Then each element
of $V_j$ is a proper contraction of $\ul{t}$.

Let $V_j'$ be the kernel of $\phi_j$ and let $U_j'$ be a complement of
$V_j'$ in $V_j$. Then the restriction of $\phi$ to $U'$ is injective,
and we may regard this as an inclusion, so that $U_j' \subseteq U_j$ for
all $j$ and $\ul{t}$ lies in $T^m_{[d]} (U' \oplus V') \subseteq T^m_{[d]}
(U \oplus V')$. We will treat $\ul{t}$ as we treated $\tilde{\ul{t}}$ in the
proof of Corollary~\ref{cor:Master}. Consider the linear combination
$t':=t_m+\sum_{i=1}^{m-1} c_i t_i$ constructed
in Corollary~\ref{cor:Master}. We want to show that this admits a
decomposition as desired.

Choose a basis of each $U_j$ containing a basis of $U_j'$. 
Each $t_i$ admits a unique decomposition
\[ t_i=\sum_u u \otimes t_{i,u} \]
where $u$ runs over all $\prod_{j=1}^d (1+ n_j)$ ``monomials'' in the
chosen bases of the $U_j$: each lives in a tensor product $\bigotimes_{j
\in I} U_j$ for some subset $I \subseteq [d]$, while its coefficient $t_{i,u}$ lives in the tensor product
$\bigotimes_{j \in I^c} V_j'$. From the fact that $\ul{t}$ lies in $T^m_{[d]}(U'
\oplus V')$ it follows that that $t_{i,u}$ is zero whenever $u$ involves
basis elements of some $U_j$ that are not in $U_j'$. So in all nonzero
terms above, $u$ is contained in the subalgebra generated by the union
of the bases of $U_j',$ which consists of  $\le \sum_{j=1}^{d} n_j$
proper contractions of $\ul{t}$. 

Now consider a monomial $u \in \bigotimes_{j \in I} U_j'$ of positive
degree. Then inclusion-exclusion yields:
\[ t_{i,u}=\sum_{u',u'': u'=u \otimes u''} 
(-1)^{|u'|-|u|} \xi_{u'}(t_i) \otimes u'' \]
where the sum is over all monomials $u'$ that are multiples of $u$,
hence $u' \in \bigotimes_{j \in I'} U_j'$ with $I' \supseteq I$, 
and $\xi_{u'}$ is a linear form on the space $\bigotimes_{j \in I'}
(U_j' \oplus V_j')$ that is $1$ on $u'$, zero on any other monomial in
$\bigotimes_{j \in I'} U_j'$, and zero on any tensor product of vectors
containing a vector in some $V_j'$ with $j \in I'$. The number of
contractions $\xi_{u'}(t_i)$ appearing in these expressions for any
$i$ and $u$ of positive degree is $\leq m\cdot \prod_{j\in [d]}
(n_j+1).$ 

Now, $t'$ has a unique decomposition like the $t_{i,u}$:
\[ t'=\sum_u u \otimes t'_u, \]
where each $t'_u$ is a linear combination of the $t_{i,u}$, and
hence is contained in the subalgebra we have so far described, as long as $u$ has positive degree.
 On the other hand, in the proof of Corollary~\ref{cor:Master}
we saw that $t'_u=t'_1$, where $u=1$ is the unique degree-$0$ monomial,
is a sum of tensor products of the $t_{i,u}$ for monomials $u$
of positive degree. This completes the proof.
\end{proof}

\begin{prop} \label{prop:Bound}
The closed subvariety $X_r(V):=\{\ul{t} \in T_{[d]}^m(V) \mid
\ul{\prk}(\ul{t}) \leq
r\}$ of $T_{[d]}^m$ satisfies the conditions of
Theorem~\ref{thm:Master} with $n_1=\ldots=n_d=n$ where $n$ is
minimal such that 
\begin{equation} \label{eq:nBound}  n^d > m^2 +
r(n^{d-1}+n). \end{equation}
\end{prop}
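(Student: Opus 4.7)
The plan is to verify the two hypotheses of Theorem~\ref{thm:Master}: that $X_r$ is defined over the prime field $F$ of $K$, and that $X_r(U) \subsetneq T_{[d]}^m(U)$ for $U=(K^n,\ldots,K^n)$ with $n$ as specified.

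For the first hypothesis, I would exhibit $X_r$ as a finite union of image closures of morphisms defined over $\ZZ$. For each tuple of nonempty proper subsets $I_1,\ldots,I_r \subseteq [d]$, there is a morphism
\[ \nu_{I_1,\ldots,I_r}\colon \AA^{m\times m}\times T_{[d]}^{m-1} \times \prod_{i=1}^r (T_{I_i}\times T_{I_i^c}) \longrightarrow T_{[d]}^m, \]
defined over $\ZZ$ and analogous to \eqref{eq:Mu}, sending $(g,(h_1,\ldots,h_{m-1}),((a_i,b_i))_i)$ to $g\cdot(h_1,\ldots,h_{m-1},\sum_{i=1}^r a_i\otimes b_i)$. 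By definition of border partition rank, $X_r$ is the union over the finitely many choices of $(I_1,\ldots,I_r)$ of the image closures of these morphisms. Each such closure is cut out by polynomials with integer coefficients, and the same holds for the finite union, giving the desired descent to $F$ for every choice of $n_1,\ldots,n_d$.

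For the second hypothesis, I would carry out a dimension count. Since $T_{[d]}^m(U)$ is irreducible of dimension $mn^d$, proper containment reduces to checking that for each choice of $(I_1,\ldots,I_r)$ the image of $\nu_{I_1,\ldots,I_r}$ on $U$-points has dimension strictly less than $mn^d$. The source has dimension
\[ m^2 + (m-1)n^d + \sum_{i=1}^r (n^{|I_i|}+n^{d-|I_i|}), \]
so the inequality to verify is $m^2 + \sum_{i=1}^r (n^{|I_i|}+n^{d-|I_i|}) < n^d$. Since $k \mapsto n^k+n^{d-k}$ is convex on $\{1,\ldots,d-1\}$ and therefore maximized at the endpoints (with value $n^{d-1}+n$), this reduces to $m^2 + r(n^{d-1}+n) < n^d$, which is exactly \eqref{eq:nBound}.

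There is no serious obstacle here; the only mild care needed is passing from a strict dimension bound on the image to strict containment, which uses that $T_{[d]}^m(U)$ is irreducible and therefore cannot be exhausted by a finite union of lower-dimensional closed subvarieties.
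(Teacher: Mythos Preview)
Your proof is correct and follows essentially the same approach as the paper: exhibit $X_r$ as a finite union of image closures of the parameterisations $\nu_{I_1,\ldots,I_r}$ (defined over the prime field), then use a dimension count showing each source has dimension $<mn^d$ via the bound $n^{|I_i|}+n^{d-|I_i|}\le n^{d-1}+n$. The only cosmetic differences are that the paper takes the product of one vanishing polynomial per parameterisation rather than invoking irreducibility of $T_{[d]}^m(U)$, and states the dimension bound directly rather than via convexity.
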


\begin{proof}
For any choice of $r$ proper nonempty subsets $I_1,\ldots,I_r$ of $[d]$,
each of size $\leq \floor{d/2}$, and $U:=(K^n,\ldots,K^n)$ consider the
parameterisation
\begin{align} 
K^{m \times m} \times T_{[d]}^{m-1}(U) \times \prod_{i=1}^r
\left( T_{I_i}(U) \times T_{I_i^c}(U) \right) \to T_{[d]}^m(U) , \label{eq:Parm}\\
(g,(t_1,\ldots,t_{m-1}),((a_i,b_i))_i) \mapsto 
g \cdot (t_1,\ldots,t_{m-1},\sum_i a_i \otimes b_i) \notag
\end{align}
which is the tensor analogue to the map in \eqref{eq:Mu}. The locus
of tensor tuples of collective partition rank at most $r$ is the union of the images
of these parameterisations over all $r$-tuples $(I_1,\ldots,I_r)$ as
above. Since the parameterisation is defined over the prime field, $X$
is defined over the prime field.

The dimension of $V_{I_i} \times V_{I_i^c}$ is at most $n^{d-1}+n$, with
equality if $I_i$ or $I_i^c$ is a singleton. Hence the left-hand side in the
parameterisation has dimension at most 
\[ (m-1)n^d + m^2 + r(n^{d-1}+n) < mn^d.  
\]
By inequality \eqref{eq:nBound},
the parameterisation is not dominant into $T_{[d]}^m(U)$, hence there
exists a nonzero polynomial $F$ that vanishes on the image. Taking the
product of these $F$s for the finitely many choices of $(I_1,\ldots,I_r)$
gives a nonzero polynomial vanishing on all of $X_r(U)$.
\end{proof}

We note the crucial fact that for $d$ fixed, the lower bound
on $n$ imposed by \eqref{eq:nBound} is {\em linear} in $r+m^{2/d}$. We can now
prove Theorem~\ref{thm:collectiveprank} in the case where $K$ is
infinite. 

\begin{proof}[Proof of Theorem~\ref{thm:collectiveprank} when $K$ is
infinite.]

By Proposition~\ref{prop:Bound}, $X_r(U) \subsetneq T^m_{[d]}(U)$ for
$U_i:=K^n, i=1,\ldots,d$ with $n$ minimal such that \eqref{eq:nBound}
holds. We claim that, for any $V \in \Vec^d$, the collective partition
rank $\prk_K(\ul{t})$ of any $K$-point $\ul{t}$ of $X_r(V)$ is at most
\begin{equation} \label{eq:PolBound} 
m\cdot \sum_{e=1}^{\lfloor \frac{d}{2} \rfloor} \binom{d}{e} n^{d-e}. 
\end{equation}
Since $n$ is linear in $r+m^{2/d}$, this expression is $\ll_d m^3 r^{d-1}$. 

We prove the claim with $X_r$ replaced by any $\Vec^d$-subvariety $Y$
of $X_r$ that is defined over the prime field $F$ of $K$. This has the
advantage that we can do induction on the minimal degree $\delta(Y)$
of a nonzero polynomial vanishing on $Y(U)$: when proving it for $Y$,
we may assume that the claim holds for all $\Vec^d$-subvarieties $Y'$
of $X_r$ for which $\delta(Y')<\delta(Y)$.

Thus let $f \in K[T^m_{[d]}(U)]$ be a nonzero polynomial, with
coefficients in $F$, of minimal degree that vanishes on $Y$. If $f$ is
constant, then $Y$ is empty and the claim holds trivially. Otherwise,
if $\cha(K)=p$, then since $f$ is of minimal degree, it is not a $p$-th
power of a polynomial. Hence in some monomial in $f$, some variable
has an exponent that is not divisible by $p$. 
Without loss of generality, this variable is the variable $x_1$ in
Theorem~\ref{thm:Master}. Construct construct $h$ as in that theorem. By
construction, $h$ is nonzero and has smaller degree than $f$.

Let $Y' \subseteq Y$ be the $\Vec^d$-subvariety defined by 
\begin{equation} \label{eq:Yprime} 
Y'(V):=\{\ul{t} \in Y(V) \mid \forall \phi \in \Hom(V,U):
h(T^m_{[d]}(\phi)\ul{t}) = 0\}. 
\end{equation}
This is again defined over $F$. Indeed, instead of letting $\phi$ run
over all of $\Hom(V,U)$, we can take for $\phi$ a tuple of matrices with
entries that are variables, expand $h(T^m_{[d]}(\phi)\ul{t})$ as a polynomial
in these variables, and take the coefficients; these define $Y'$---we
use here that $K$ is infinite.

On $Y'$ the nonzero polynomial
$h$ of strictly lower degree than $\deg(f)$ vanishes, so that
$\delta(Y')<\delta(Y)$. Hence the claim holds for $Y'$ by the induction
hypothesis. On the other hand, for $\ul{t}$ a $K$-point of $Y'(V) \setminus Y(V)$ we can
apply Corollary~\ref{cor:Master} to conclude that $\ul{t}$ has collective
partition rank at most \eqref{eq:PolBound}.
\end{proof}

\section{Proof of Theorem~\ref{thm:collectiveprank} for finite $K$}

In this section we assume that $\cha K=p>0$ and drop the requirement that
$K$ is infinite. We consider the affine scheme $X_r \subseteq T_{[d]}^m$
whose $\ol{K}$-points are the tuples of border collective partition rank
$\leq r$. Let $U_i=K^n$ where $n$ is such that \eqref{eq:nBound}
holds and let $f$ be a nonzero polynomial, with coefficients in the prime
field, of minimal degree in the ideal of $X_r(U)$. Assume that $x_1$
appears in some monomial in $f$ and its exponent in that monomial is not
divisible by $p$, so that $h:=\frac{\partial f}{\partial x_1}$ is nonzero.

If $\ul{t}$ is a $K$-point of $X_r(V)$ for which there exists a $d$-tuple
$\phi \in \Hom(V,U)$ of $K$-linear maps such that
$h(T^m_{[d]}(\phi)\ul{t})
\neq 0$, then Corollary~\ref{cor:Master} still yields the desired upper
bound on $\prk_K(\ul{t})$---indeed, we may extend $K$ to an infinite field $L$
to apply that result, and we use that the morphisms $\psi,\sigma$ from
Theorem~\ref{thm:Master} are defined over the prime field, hence over $K$.

However, what if $\ul{t}$ is a $K$-point of $X_r(V)$ such that
$h(T^m_{[d]}(\phi) \ul{t})=0$ for all $\phi \in \Hom(V,U)$? If $K$ is finite,
then this does not {\em a priori} imply that $h(T^m_{[d]}(\phi)\ul{t})=0$
holds for all $\phi \in \Hom_L(L \otimes V, L \otimes U)$, and so
$\ul{t}$
is perhaps not a point in the proper subvariety $Y' \subsetneq X_r$
defined in \eqref{eq:Yprime}.

To nevertheless obtain a bound on $\prk_K(\ul{t})$, we proceed in this case as follows. We
find a field $L$ and a $\phi \in \Hom_L(L \otimes V, L \otimes U)$
such that $h(T^m_{[d]}(\phi)\ul{t}) \neq 0$. Then Corollary~\ref{cor:Master}
gives an upper bound on the collective partition rank $\prk_L(\ul{t})$.  Finally,
we use Proposition~\ref{prop:FieldExtension} to derive an upper bound
on $\prk_K(\ul{t})$. Due to the necessity of passing to a sufficiently large field extension, the bounds we obtain for finite fields incur an additional log term.

For this to work, we need that $L$ is large enough. Observe that
$h(T^m_{[d]}(\phi)\ul{t})$ has degree $d \cdot \deg(h)$ in the entries of $\phi$.
If $|L|>d \cdot \deg(h)$, then a $\phi \in \Hom_L(L \otimes V, L \otimes U)$
exists for which this is nonzero, unless $\ul{t}$ is a $K$-point of $Y'$,
in which case we can invoke the induction hypothesis.

Hence it suffices to take for $L$ an extension of degree greater than
$ \log_2 (d \cdot \deg(h)) $---the base $2$ rather than
$|K|$ ensures that we obtain a bound that is independent on the finite
field. In what follows, $\log$ stands for $\log_2$. 

\begin{prop} \label{prop:DegBound}
For $d$ fixed and $r$ sufficiently large, $n=4(r+m^{2/d})$ satisfies
 \eqref{eq:nBound}. Choose this $n$ and set $U:=(K^n,\ldots,K^n)$. For any fixed parametrisation \eqref{eq:Parm}, let $X_{I_1,\ldots,I_r} \subset T^m_{[d]}$ be the image. Then the
minimal degree $D$ of a nonzero polynomial $f$ that vanishes identically
on $X_{I_1,\ldots,I_r}(U) $ satisfies
\[ \log(D) \ll_d \log(r+m). \]
\end{prop}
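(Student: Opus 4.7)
The plan is a standard dimension-counting argument applied to the pullback of the parametrisation $\mu := \mu_{m,d_1,\ldots,d_r}$ of~\eqref{eq:Parm}, viewed as a polynomial morphism $\mu \colon \AA^N \to \AA^M$ of affine varieties. Here $M := \dim T^m_{[d]}(U) = mn^d$ and $N := m^2 + (m-1)n^d + \sum_{i=1}^r (n^{d_i}+n^{d-d_i})$, bounded above by $m^2 + (m-1)n^d + r(n^{d-1}+n)$. Each coordinate of $\mu$ is a polynomial of degree at most $3$ in the source variables: the contributions $g_{j,k} v_k$ with $k \leq m-1$ have degree $2$, while $g_{j,m}(a_i \otimes b_i)$ has degree $3$. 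Consequently, the pullback $\mu^*$ sends $K[y_1,\ldots,y_M]_{\leq D}$ into $K[x_1,\ldots,x_N]_{\leq 3D}$, and its kernel in degree at most $D$ is precisely the space of polynomials of degree $\leq D$ that vanish identically on $X_{I_1,\ldots,I_r}(U)$.

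A nonzero kernel element exists as soon as
\[
\binom{M+D}{M} > \binom{N+3D}{N}.
\]
I would verify this for $D = (r+m)^{C_d}$ with a constant $C_d$ depending only on $d$. First, combining the hypothesis $n = 4(r+m^{2/d})$ with the elementary bounds $n^d \geq 4^d(r^d + m^2)$ and $rn^{d-1} \leq 8^{d-1}(r^d + r m^{2-2/d})$ shows that for $r$ sufficiently large one has $M - N \geq c_d\,n^d$, where $c_d>0$ depends only on $d$. A Stirling-type expansion of the two binomials then reduces the required inequality, up to lower-order corrections, to one of the form $\log(eD/M) \geq c'_d$; and since $M \leq (r+m)^{O_d(1)}$, this is satisfied for $D$ polynomial in $r+m$ with exponent depending only on $d$.

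The main obstacle is controlling the factor $N/(M-N)$: at face value this is of order $m$, because $M$ and $N$ share a leading term $m n^d$ while the gap $M-N$ is only of order $n^d$. A crude Stirling estimate would therefore yield a bound $\log D \ll_d m$, which is insufficient when $m$ is large compared to $\log(r+m)$. To shave this $m$-dependence I would refine the count by exploiting the symmetries of the parametrisation: the image is $\mathrm{GL}_{m-1}$-invariant on the free factors $(v_1,\ldots,v_{m-1})$, invariant under the symmetric group and the diagonal torus acting on the pairs $(a_i,b_i)$, and, when several $I_i$ coincide, invariant under an additional $\mathrm{GL}_r$-action on the decomposition of $w = \sum_i a_i \otimes b_i$. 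Replacing $N$ by the effective image dimension of $\mu$ and analysing the binomial ratio term by term as a product $\prod_{i=1}^D (M+i)/(D+i) \cdot \prod_{i=D+1}^{3D} i/(N+i)$ should then convert the linear-in-$m$ dependence into the required logarithmic dependence.
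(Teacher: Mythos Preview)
Your setup through the binomial comparison is exactly the paper's: degree-$3$ pullback, $M=mn^d$, $N\le(m-\tfrac14)n^d$, and one seeks $D$ with $\binom{M+D}{M}>\binom{N+3D}{N}$. The paper applies the bounds $(N/k)^k\le\binom{N}{k}\le(eN/k)^k$ with $D\ge(m-\tfrac14)n^d$ and arrives at the sufficient condition
\[
\tfrac14 n^d\,\log D \;\ge\; mn^d\log(mn^d)\;+\;(m-\tfrac14)n^d\,\log\!\frac{4e}{(m-\tfrac14)n^d},
\]
then divides by $\tfrac14 n^d$ and asserts $\log D\ll_d\log(r+m)$. You are right to flag the obstacle: after dividing, the term $4(m-\tfrac14)\log(4e)$ remains on the right-hand side, and it is linear in $m$. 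The near-cancellation between $4m\log(mn^d)$ and $-4(m-\tfrac14)\log((m-\tfrac14)n^d)$ leaves only $\log(mn^d)+O(1)$, so the crude bounds give $\log D\ll_d m+\log(r+m)$, precisely the defect you describe. The paper does not actually eliminate this linear term; it reaches the same inequality you do and passes over it. So the gap you identify is shared by the paper's proof as written.

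Your proposed repair, however, does not close it. Quotienting by the symmetry groups you list lowers $N$ by at most $\dim\GL_{m-1}+\dim\GL_r+O(r)=O(m^2+r^2)$, negligible against the dominant $(m-1)n^d$ term since $n^d\ge 4^d(r^d+m^2)$; the ratio $N/(M-N)$ stays of order $m$. The term-by-term product analysis does not help either: writing the ratio as $\prod_{i=1}^{D}\frac{M+i}{N+i}\cdot\prod_{i=D+1}^{3D}\frac{i}{N+i}$, the first factor is at most $(M/N)^D\le(1+\tfrac{1}{4m-1})^D$, while for $D$ polynomial in $r+m$ (hence $D\ll N$) the second factor is at most $(3D/N)^{2D}$, and the first cannot compensate the second unless $D$ is exponential in $m$. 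The underlying issue is structural: the degree-tripling contributes a factor $3^N$ against only $1^M$, and $N$ is within a $(1-\tfrac{1}{4m})$ fraction of $M$. To obtain $\log D\ll_d\log(r+m)$ one would need equations that avoid the $(m-1)n^d$ free parameters altogether---for instance, equations depending only on a single linear combination of the $t_i$---rather than any refinement of this global dimension count.
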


\begin{proof}
The first statement is immediate. Now consider the parameterisation in
\eqref{eq:Parm}, expanded in the standard bases of these spaces.  Along
the pull-back of this map, a coordinate on the space $T^{m}_{[d]}(U)$
is mapped to a polynomial each of whose monomials is of degree $\leq 3$
in the coordinates of $g$ and the $t_i,a_i,b_i$ (actually, these monomials
have more structure, but we ignore this). Hence a polynomial of degree
$\leq D$ on $T^{m}_{[d]}(U)$ is mapped to a polynomial each of whose
monomials is of degree $\leq 3D$ in the coordinates on the left-hand
side. The number of such monomials is at most
\begin{align*} 
\binom{m^2 + (m-1)n^d + r(n^{d-1}+n)+3D}{3D} 
\leq \binom{(m-\frac{1}{4})n^d +
3D}{(m-\frac{1}{4})n^d}=:A.
\end{align*}
We determine $D$ such that this pullback is guaranteed to have
a nontrivial kernel. For this, it suffices that there are more
degree-$\leq D$ monomials in $T^{m}_{[d]}(U)$ than the expression above.
Explicitly, it suffices that 
\[ \binom{mn^d + D}{mn^d} > A. \]
We use the well-known bounds 
\[ \left(\frac{N}{k}\right)^k \leq \binom{N}{k} \leq
\left(\frac{eN}{k}\right)^k \]
to find sufficient conditions on $D$. By the right-hand bound and
choosing $D \geq (m-\frac{1}{4})n^d$ we find 
\begin{align*} A \leq 
\left( \frac{e((m-\frac14)n^d + 3D)}{(m-\frac14)n^d}
\right)^{(m-\frac14)n^d} 
\leq 
\left( \frac{4eD}{(m-\frac14)n^d} \right)^{(m-\frac14)n^d}. 
\end{align*}
On the other hand, using the lower bound for binomial coefficients, we
find 
\[ \binom{mn^d+D}{mn^d} \geq \left( \frac{mn^d+D}{mn^d} \right)^{mn^d}
\geq \left( \frac{D}{mn^d} \right)^{mn^d}. \]
Taking logarithms, we find that it suffices that 
\begin{align*} \log(D)\cdot \frac{1}{4}n^d &\geq  mn^d \log(mn^d) 
+ 
(m-\frac14)n^d\log\left(\frac{4e}{(m-\frac14)n^d}\right). 
\end{align*}
This yields the sufficient
condition  
\[ \log(D) \geq 4\log(mn^d) 
+ 4(m-\frac14) \log\left(\frac{3e}{(m-\frac13)n^d}\right). \] 
We see that we can take $\log(D)$ a suitable constant multiple of
$\log(r+m)$, as desired. 
\end{proof}

\begin{proof}[Proof of Theorem~\ref{thm:collectiveprank} for finite
$K$]
Any $\ul{t}\in X_r(V)$ lies in $X_{I_1,\ldots,I_r}$ for some choice of proper, nonempty subsets $I_1,\ldots,I_r \subset [d].$  By Proposition~\ref{prop:DegBound}, choosing $n=4(r+m^{2/d})$ for $r$
sufficiently large and setting $U:=(K^n,\ldots,K^n)$, the minimal
degree $\delta $ of a nonzero polynomial vanishing on all of
$X_{I_1,\ldots,I_r}(U)$ satisfies $\log(\delta) \ll_d \log(r+m)$. This then also holds for the
$\Vec^d$-subvarieties $Y \subseteq X_r$ used in the proof
of Theorem~\ref{thm:collectiveprank} for infinite fields. By the
discussion preceding the proof of Proposition~\ref{prop:DegBound},
for any $K$-point $\ul{t} \in Y(V)$ we have the following dichotomy:
either $\ul{t}$ lies in the
proper subvariety $Y'(V)$ defined by \eqref{eq:Yprime}, so that the
induction hypothesis applies; or else there exists an extension $L$
of $K$ with $[L:K] \ll_d \log(r+m)$ such that $\prk_L(\ul{t})$ is at
most the polynomial $Q(r,m)$ from \eqref{eq:PolBound} (defined using $n=4(r+m^{2/d})$).
Then Proposition~\ref{prop:FieldExtension} implies that
\[ \prk_K(\ul{t}) \ll_d m^3r^{d-1}\log(r+m), \]
as desired. 
\end{proof}

\section{Proof of Theorem~\ref{thm:Df}}

We recall that Theorem~\ref{thm:Df} asks for an expression of $f$ as a polynomial in a bounded number of elements of the
space $D(f).$ 

\begin{proof}[Proof of Theorem~\ref{thm:Df}]
First we assume that $K$ is infinite and let $f$ be a $K$-point of the
variety of forms of border strength $\leq r$ in $S^d V$, where $V$ is an $n$-dimensional
vector space. Then $t:=\iota(f) \in V^{\otimes d}$ is a tensor of border partition
rank $\leq Dr$, where $D$ is as in Proposition~\ref{prop:Sym}. By Corollary~\ref{cor:Algebra}, $t$ is contained in a subalgebra generated by $\ll_d r^d$ proper contractions. This
uses the same inductive reasoning as used in the proof of Theorem~\ref{thm:collectiveprank}.

Given a proper contraction $s_j \in T_{I_j}(V),$ a
straightforward computation shows that $h_j:=\pi_{I_j}(s_j) \in S^{|I_j|}
V$ is an element of $D(f)$ and that $f = \frac{1}{d!}\pi_d(t)$ (here we use our hypothesis on $\cha(K)$)  is contained in the subalgebra generated by the $\pi_{I_j}(s_j).$ This concludes
the proof when $K$ is infinite.

When $K$ is finite, as in the proof of Theorem~\ref{thm:collectiveprank}
over finite fields, we extend $K$ to a field $L$ such that $[L:K] \ll_d \log r$ and we find that $f$ lies in an algebra
generated by $\ll_d r^d$ elements of $L \otimes_K D(f). $ As in Proposition~\ref{prop:FieldExtension}, this implies that $f$ is in an algebra generated by $\ll_d r^d\log r$ elements of $D(f).$ 
\end{proof}

\bibliographystyle{alpha}
\bibliography{diffeq,draismapreprint,draismajournal}

\end{document}